\documentclass{article}
\usepackage[
    letterpaper,
    left=1.15in,
    right=1.15in,
    top=0.9in,
    bottom=0.95in
]{geometry}
\usepackage{graphicx} 

\usepackage{amsmath, amssymb, amsfonts, amsthm}
\usepackage{mathdots}
\usepackage{subcaption}
\usepackage{xcolor}
\usepackage{enumitem}
\usepackage[numbers]{natbib}

\usepackage{algorithm}
\usepackage{algpseudocode}

\usepackage{hyperref}
\usepackage{colortbl}
\definecolor{niceblue}{rgb}{0.0,0.19,0.56}

\usepackage{threeparttable}
\usepackage{pifont}
\newcommand{\cmark}{\ding{51}}
\newcommand{\xmark}{\ding{55}}
\newcommand{\algname}[1]{\textsc{#1}}
\definecolor{bgcolor2}{RGB}{235,245,255}

\usepackage[capitalise]{cleveref}

\usepackage{todonotes}
\makeatletter
\renewcommand*\env@matrix[1][*\c@MaxMatrixCols c]{%
  \hskip -\arraycolsep
  \let\@ifnextchar\new@ifnextchar
  \array{#1}}
\makeatother

\usepackage{array,booktabs,multirow,makecell}

\usepackage{nicefrac}

\allowdisplaybreaks

\title{Direct Acceleration of Stochastic Root-Finding\\ Without Variance Reduction and Regularization}

\author{TaeHo Yoon\textsuperscript{*}
\and Nicolas Loizou\textsuperscript{*}
}

\date{}

\newcommand{\cB}{{\mathcal{B}}}

\newcommand{\cF}{{\mathcal{F}}}
\newcommand{\cG}{{\mathcal{G}}}

\newcommand{\cN}{{\mathcal{N}}}
\newcommand{\cO}{{\mathcal{O}}}

\newcommand{\cQ}{{\mathcal{Q}}}

\usepackage[bb=boondox]{mathalfa}

\usepackage{xparse}
\DeclareFontFamily{U}{ntxmia}{}
\DeclareFontShape{U}{ntxmia}{m}{it}{<-> ntxmia }{}
\DeclareFontShape{U}{ntxmia}{b}{it}{<-> ntxbmia }{}
\DeclareSymbolFont{lettersA}{U}{ntxmia}{m}{it}
\SetSymbolFont{lettersA}{bold}{U}{ntxmia}{b}{it}

\ExplSyntaxOn
\NewDocumentCommand{\varmathbb}{m}
 {
  \tl_map_inline:nn { #1 }
   {
    \use:c { varbb##1 }
   }
 }
\tl_map_inline:nn { ABCDEFGHIJKLMNOPQRSTUVWXYZ }
 {
  \exp_args:Nc \DeclareMathSymbol{varbb#1}{\mathord}{lettersA}{\int_eval:n { `#1+67 }}
 }
\exp_args:Nc \DeclareMathSymbol{varbbk}{\mathord}{lettersA}{169}
\ExplSyntaxOff

\newcommand*{\Fix}{\mathrm{Fix}\,}
\newcommand*{\Zer}{\mathrm{Zer}\,}

\newcommand{\reals}{\mathbb{R}}

\newcommand{\opF}{{\varmathbb{F}}}

\newcommand{\opH}{{\varmathbb{H}}}
\newcommand{\opI}{{\varmathbb{I}}}

\newcommand{\opT}{{\varmathbb{T}}}

\newcommand{\inprod}[2]{\left\langle #1,#2 \right\rangle}

\newcommand{\sqnorm}[1]{\left\| #1 \right\|^2}
\newcommand{\norm}[1]{\left\| #1 \right\|}

\newcommand{\expec}[1]{\mathbb{E}\left[ #1 \right]}
\newcommand{\condexp}[2]{\mathbb{E}\left[ #1 \,\middle|\, #2 \right]}

\newcommand{\R}{\mathbb{R}}
\newcommand{\E}{\mathbb{E}}

\usepackage{thmtools}
\definecolor{shadecolor}{gray}{0.9}
\declaretheoremstyle[
headfont=\normalfont\bfseries,
notefont=\mdseries, notebraces={(}{)},
bodyfont=\normalfont,
postheadspace=0.5em,
spaceabove=\topsep,
mdframed={
  skipabove=8pt,
  skipbelow=8pt,
  hidealllines=true,
  backgroundcolor={shadecolor},
  innerleftmargin=4pt,
  innerrightmargin=4pt}
]{shaded}

\declaretheorem[style=shaded,within=section]{definition}
\declaretheorem[style=shaded,sibling=definition]{theorem}
\declaretheorem[style=shaded,sibling=definition]{proposition}
\declaretheorem[style=shaded,sibling=definition]{assumption}

\declaretheorem[style=shaded,sibling=definition]{lemma}

\begin{document}

\renewcommand\thefootnote{\fnsymbol{footnote}}
\footnotetext[1]{Department of Applied Mathematics and Statistics \& Mathematical Institute for Data Science, Johns Hopkins University.
\texttt{\{tyoon7, nloizou\}@jhu.edu}}

\renewcommand\thefootnote{\arabic{footnote}}
\setcounter{footnote}{0}

\maketitle

\begin{abstract}
Acceleration for deterministic root-finding problems has been extensively studied in recent years; specifically, the anchor-based, or Halpern-type methods achieve optimal convergence rates with respect to the operator norm.
However, acceleration via these methods does not directly carry over to stochastic setting due to accumulation of errors, unless one enforces diminishing variance via increasing batch sizes or variance reduction techniques.
In this work, we show that another class of acceleration, namely the \textit{dual-anchor} mechanism, extends to the stochastic setting without such error accumulation, in contrast to anchor-based algorithms.
Consequently, we cleanly achieve $\cO(\epsilon^{-3})$ complexity with iteration-independent batch size, without any variance reduction or double-loop recursive regularization, for stochastic root-finding (\textit{resp.}\ fixed-point) problems with cocoercivity (\textit{resp.}\ square-nonexpansivity) in expectation.
For strongly monotone operators, the same algorithm attains a sharper $\widetilde\cO(\epsilon^{-2})$ complexity, nearly matching the lower bound in terms of $\epsilon$-dependence.
\end{abstract}

\section{Introduction}

Root-finding problems for an operator $\opF\colon \reals^d \to \reals^d$, stated as
\begin{align}
\label{eqn:root-finding-problem}
    \underset{x\in \reals^d}{\text{find}} \quad \opF(x) = 0 ,
\end{align}
generalize minimization problems and have been studied under distinct formulations including monotone inclusion \citep{Minty1962_monotone, rockafellarMonotoneOperatorsProximal1976, BauschkeCombettes2017_convex, RyuYin2022_largescale}, minimax optimization \citep{nemirovskiProxmethodRateConvergence2004, goodfellowGenerativeAdversarialNets2014, MadryMakelovSchmidtTsiprasVladu2018_deep, DaskalakisIlyasSyrgkanisZeng2018_training, mertikopoulosOptimisticMirrorDescent2019, LinJinJordan2020_nearoptimal}, equilibrium search \citep{stampacchiaFormesBilineairesCoercitives1964, FacchineiPang2003_finitedimensional, scutariRealComplexMonotone2014}, or fixed-point problems \citep{Krasnoselskii1955_two, Mann1953_mean, Halpern1967_fixed, Wittmann1992_approximation, SabachShtern2017_first}.
In the past few years, there have been significant developments in accelerated algorithms for reducing the squared operator/residual norm with the optimal $\sqnorm{\opF(x_k)} = \cO(1/k^2)$ rate \citep{Lieder2021_convergence, Diakonikolas2020_halpern, Kim2021_accelerated, ParkRyu2022_exact, yoonAcceleratedAlgorithmsSmooth2021, LeeKim2021_fast}. 
A recurring message from the optimization literature, however, is that deterministic acceleration is often \emph{brittle} under oracle errors; in the stochastic setting where $\opF(x) = \expec{\widehat \opF(x;\xi)}$ and the algorithm uses $\widehat \opF(x;\xi)$, error may accumulate quickly with these accelerated methods and break their fast convergence from the noiseless setting.
Consequently, for stochastic optimization problems, taking advantage of the study of acceleration was not straightforward---it required decaying oracle inexactness, variance reduction, restart mechanisms, or stronger assumptions on the problem \citep{devolderFirstorderMethodsSmooth2014, ghadimiAcceleratedGradientMethods2016, liPAGESimpleOptimal2021, cai2022stochastic}.
Specifically for stochastic monotone inclusion problems, stochastic Halpern iteration and accelerated (anchored) extragradient type analyses suffered precisely from these issues \citep{LeeKim2021_fast, cai2022stochastic, caiVarianceReducedHalpern2024}.

We take a different viewpoint toward handling stochastic instability. 
Recent work \citep{yoonOptimalAccelerationMinimax2024} showed that the acceleration mechanism in fixed-point and minimax problems is not unique: besides anchor-based algorithms \citep{Lieder2021_convergence, Kim2021_accelerated, yoonAcceleratedAlgorithmsSmooth2021, LeeKim2021_fast, Tran-DinhLuo2021_halperntype}, there exist their \textit{H-dual} algorithms, which display materially different update rules but share exactly the same deterministic last-iterate rate with the optimal anchor-based acceleration. 
Hence, the deterministic worst-case complexity alone does not identify a single preferred accelerated method.
In this paper, we demonstrate that the stochastic extension can break this tie: the \textit{Stochastic Dual Optimal Halpern Method (S-Dual-OHM)} attains $\expec{\norm{\opF(x_{N-1})}} \le \epsilon$ with $\cO(\epsilon^{-3})$ oracle complexity, with iteration complexity $N=\cO(\epsilon^{-1})$ and with constant mini-batch size $\cO(\epsilon^{-2})$.
This is done cleanly and more directly without assuming diminishing variance or using variance reduction techniques, which is not what stochastic anchoring methods were capable of \citep{LeeKim2021_fast, cai2022stochastic, caiVarianceReducedHalpern2024}.

In our view, this result has a conceptual implication beyond simply providing another efficient stochastic algorithm.
It challenges the folklore view that acceleration, as studied in deterministic optimization, is brittle and incompatible with stochastic noise.
Equivalently, optimal deterministic methods can behave very differently under stochastic perturbations, and selecting the right representation of acceleration may enable the design of efficient stochastic algorithms.

\paragraph{Contributions.}
We summarize our main contributions as follows.

\begin{table*}[t]
    \centering
    \caption{\small Comparison of stochastic algorithms for finding an $\epsilon$-approximate solution with $\mathbb{E}\!\left[\|\opF(x)\|\right] \le \epsilon$ for $\nicefrac{1}{L}$-cocoercive operator $\opF$.}
    \vspace{1mm}
    \label{table:algorithm-comparison}
    \begin{threeparttable}
    \begingroup
    \setlength{\tabcolsep}{5.2pt}
    \renewcommand{\arraystretch}{1.12}
    \resizebox{\textwidth}{!}{%
    \begin{tabular}{lcccccc}
        \toprule
        Algorithms
        & Complexity
        & \makecell{Last-\\iterate}
        & \makecell{Variance-\\reduction free}
        & \makecell{Single-\\loop}
        & \makecell{Algorithm\\parameters}
        & \makecell{Theory\\requirements} 
        \\
        \midrule
        \algname{SGDA}/\algname{SEG} \citep{gorbunovStochasticExtragradientGeneral2022}
        & $\cO(\epsilon^{-4})$\textsuperscript{$\dagger$}
        & \xmark
        & \cmark
        & \cmark
        & $L,B$
        & $D,\sigma$
        \\
        \addlinespace[2pt]
        \algname{S-Halpern-PAGE} \citep{cai2022stochastic}
        & $\cO(\epsilon^{-3})$
        & \cmark
        & \xmark
        & \cmark
        & $L,p_k,S_1^{(k)}, S_2^{(k)}$
        & $D,\sigma$
        \\
        \addlinespace[2pt]
        \algname{S-Halpern-VR-Finite} \citep{caiVarianceReducedHalpern2024}
        & $\widetilde{\cO}(n + \sqrt{n}\,\epsilon^{-1})$\textsuperscript{$\ddagger$}
        & \cmark
        & \xmark
        & \cmark
        & $L,B,\lambda_k; \, p_k$ or $M_k$
        & $D,n$
        \\
        \addlinespace[2pt]
        \algname{RAIN} \citep{chenNearoptimalAlgorithmsMaking2024}
        & $\widetilde{\cO}(\epsilon^{-2})$
        & \cmark
        & \cmark
        & \xmark\textsuperscript{*}
        & $L,\lambda, \gamma, N_s, K_s$
        & $D,\sigma$
        \\
        \addlinespace[2pt]
        \rowcolor{bgcolor2}
        \algname{S-Dual-OHM} (This work)
        & $\cO(\epsilon^{-3})$
        & \cmark
        & \cmark
        & \cmark
        & $L,B,N$
        & $D,\sigma$
        \\
        \bottomrule
    \end{tabular}%
    }
    \endgroup
    \begin{tablenotes}[flushleft]
        \scriptsize
        \item[]\begin{minipage}[t]{0.99\textwidth}
        \algname{SGDA} denotes Stochastic Gradient Descent-Ascent $x_{k+1} = x_k - \alpha \opF(x_k; \xi_k)$ and \algname{SEG} denotes stochastic version of Extragradient \citep{Korpelevich1976_extragradient}.
        ``Last-iterate'' indicates whether the stated guarantee is for the final iterate and not the averaged, best, or uniform random iterate. ``Variance-reduction free'' indicates whether the algorithm avoids using variance reduction techniques. ``Single-loop'' indicates whether the algorithm avoids using outer--inner loop structure (e.g., regularization techniques). 
        ``Algorithm parameters'' lists the hyperparemeters whose values should be specified for the run, 
        where the cocoercivity/Lipschitzness parameter $L$ is used for determining the step-sizes of the algorithm.
        ``Theory requirements'' lists the information of problem-related parameters that are needed for selecting the hyperparameters that theoretically guarantee $\mathbb{E}\!\left[\|\opF(x)\|\right] \le \epsilon$.
        Here $N$ is the total number of algorithm iterations, $D = \norm{x_0 - x_\star}$, $\sigma$ is a bound on the operator variance, and $n$ is the number of component operators in the case where the problem has a finite-sum structure.
        The $\widetilde{\cO}(\cdot)$ notation hides logarithmic factors.\\
        \textsuperscript{$\dagger$}\citep[Corollary~E.4]{gorbunovStochasticExtragradientGeneral2022} shows this for uniform random iterate of \algname{SEG}, and a similar proof for \algname{SGDA} can be derived.\\
        \textsuperscript{$\ddagger$}Works only for finite-sum problems where $\opF(\cdot)=\frac{1}{n}\sum_{i=1}^n \opF(\cdot;\xi_i)$.\\
        \textsuperscript{*}\citep{chenNearoptimalAlgorithmsMaking2024} presents a single-loop version of \algname{RAIN} for numerical experiments (which we use in our experiments for comparison), but their theoretical convergence analysis still requires a nontrivial double-loop structure.
        \end{minipage}
        \vspace{-2mm}
    \end{tablenotes}
    \end{threeparttable}
\end{table*}

\begin{itemize}[leftmargin=1.2em,itemsep=0.25em]
    \item[$\diamond$] 
    We show that under cocoercivity of $\opF$ in expectation, our proposed \ref{eqn:stochastic-Dual-OHM} with mini-batching over $\widehat \opF(\cdot;\xi)$ with \textit{constant step-size} $\alpha \in \left(0, \frac{2}{L}\right]$ and \textit{constant batch size} $B = \cO(\epsilon^{-2})$ achieves the bound $\expec{\norm{\opF(x_{N-1})}} \le \epsilon$ within $N = \cO(\epsilon^{-1})$ iterations.
    The resulting total oracle complexity $\cO(\epsilon^{-3})$, to the best of our knowledge, has not been achieved in prior work without variance reduction or double-loop regularization techniques (\cref{table:algorithm-comparison}). 

    \item[$\diamond$] We show that when $\opF$ is additionally strongly monotone, the same \ref{eqn:stochastic-Dual-OHM} algorithm can be early-stopped in $k = \widetilde\cO(\nicefrac{L}{\mu}\log\epsilon^{-1})$ iterations to attain $\expec{\norm{\opF(x_k)}} \le \epsilon$.
    This yields the oracle complexity $\widetilde\cO(\epsilon^{-2})$, which has near-optimal dependence on $\epsilon$.

    \item[$\diamond$] We provide numerical experiments demonstrating that Dual-OHM is indeed more robust to stochastic noise than Halpern iteration, and that \ref{eqn:stochastic-Dual-OHM} can effectively solve stochastic root-finding problems.
    
\end{itemize}

\section{Related Work}
\label{section:related-work}

\paragraph{Fixed-point problems, Halpern iteration and acceleration.}
For fixed-point problems
\begin{align}
\label{eqn:fixed-point-problem}
    \underset{x\in \reals^d}{\text{find}} \quad x = \opT(x) ,
\end{align}
with a nonexpansive operator $\opT\colon \reals^d \to \reals^d$, the Halpern iteration $x_{k+1} = \beta_k x_0 + (1-\beta_k) \opT x_k$ has been studied over a long history in the literature \citep{Halpern1967_fixed, Wittmann1992_approximation}, and has been found to provide accelerated worst-case convergence rate $\norm{x_k - \opT(x_k)} = \cO(1/k)$ with $\beta_k = \cO(1/k)$ \citep{SabachShtern2017_first, Diakonikolas2020_halpern, Lieder2021_convergence, Kim2021_accelerated, LeeRyu2023_accelerating}. 
The best choice $\beta_k = \frac{1}{k+2}$, due to \citep{Lieder2021_convergence, Kim2021_accelerated}, was later shown to yield an optimal rate $\sqnorm{x_k - \opT(x_k)} \le \frac{4\sqnorm{x_0 - x_\star}}{(k+1)^2}$, exactly matching the lower bound \citep{ParkRyu2022_exact}.
More recently, it was discovered that the same exact optimal rate for a fixed finite horizon $N$ can be achieved by distinct types of acceleration forming an infinite family of algorithms \citep{yoonHinvarianceTheoryComplete2025} including Dual-OHM \citep{yoonOptimalAccelerationMinimax2024}, the basis of this work.

\paragraph{Acceleration of minimax optimization and monotone inclusion.}
Minimax optimization with convex-concave objective can be recast into monotone inclusion, i.e., root-finding problem~\eqref{eqn:root-finding-problem} with saddle gradient operator $\opF$, which is a monotone operator \citep{rockafellarMonotoneOperatorsProximal1976, FacchineiPang2003_finitedimensional}.
Some early works including \citep{RyuYuanYin2019_ode, Diakonikolas2020_halpern} first connected Halpern acceleration to these problems, and \citep{Diakonikolas2020_halpern} achieved near-optimal complexity for Lipschitz continuous monotone inclusions. 
The remaining logarithmic gap was removed by the Extra Anchored Gradient algorithm \citep{yoonAcceleratedAlgorithmsSmooth2021}, and the so-called anchor acceleration for minimax optimization and monotone inclusion has been studied extensively since then \citep{LeeKim2021_fast, Tran-DinhLuo2021_halperntype, yoonAcceleratedMinimaxAlgorithms2025, SuhParkRyu2023_continuoustime, CaiZheng2023_accelerated, botExtragradientMethodFlexible2026, alcalaMovingAnchorExtragradient2023, leeNearoptimalSampleComplexity2025}.
Several works proposed its extension via distinct interpretation in connection with Nesterov momentum \citep{BotCsetnekNguyen2023_fast, tran-dinhHalpernsFixedpointIterations2024}.
On the other hand, the dual-anchor acceleration analogous to Dual-OHM from fixed-point problems is achievable in this setting as well \citep{yoonOptimalAccelerationMinimax2024}.

\paragraph{Residual-reducing algorithms for stochastic fixed-point and monotone inclusion problems.}
The Fast Extragradient algorithm \citep{LeeKim2021_fast}, an anchor-based accelerated minimax algorithm, was proposed with its stochastic extension under the assumption that the operator's variance at $k$-th iteration diminishes at the order of $\cO(1/k)$.
Then \citep{cai2022stochastic} showed that stochastic Halpern iteration and its minimax variants, combined with PAGE variance reduction \citep{liPAGESimpleOptimal2021} can reduce the complexity of achieving $\expec{\norm{\opF(x_k)}} \le \epsilon$ to $\cO(\epsilon^{-3})$, which could be further improved to $\widetilde \cO(n + \sqrt{n}\epsilon^{-1})$ \citep{caiVarianceReducedHalpern2024} in the finite-sum setting where $\opF$ is the average of $n=o(\epsilon^{-3})$ sample operators.
Near-optimal complexity of $\widetilde \cO(\epsilon^{-2})$ for stochastic monotone inclusion has been achieved in \citep{chenNearoptimalAlgorithmsMaking2024} with the matching $\Omega(\epsilon^{-2})$ lower bound via more sophisticated recursive anchoring (regularization) algorithm.
On another line of work, \citep{bravoStochasticFixedpointIterations2024, pischkeAsymptoticRegularityGeneralised2026, bravoStochasticHalpernIteration2026} studied stochastic Krasnosel'ski\u{\i}--Mann or Halpern iterations in general normed spaces, where the lower bound of $\Omega(\epsilon^{-3})$ was established \citep{bravoStochasticHalpernIteration2026}.

\paragraph{Broader view of extending acceleration to stochastic settings.}
In smooth convex minimization, it is now standard that accelerated methods are fragile to persistent oracle error: preserving acceleration under stochasticity typically requires either variance reduction or catalyst-type outer--inner regularization \citep{devolderFirstorderMethodsSmooth2014, linUniversalCatalystFirstorder2015, allen-zhuKatyushaFirstDirect2018}. 
A similar viewpoint has been adopted for monotone root-finding problems in prior works \citep{cai2022stochastic, caiVarianceReducedHalpern2024, chenNearoptimalAlgorithmsMaking2024}.
Our approach departs from this line of work.
Rather than stabilizing Halpern-type acceleration by additional variance-reduction or regularization, we leverage the fact that deterministic acceleration is not unique, and there exist distinct optimal accelerated algorithms with the same worst-case deterministic rate. 
We deliver the new observation that among them, Dual-OHM is intrinsically less sensitive to stochastic perturbations than Halpern-type algorithms, enabling direct stochastic extension.

\section{Preliminaries and Assumptions}

In this section, we establish some necessary preliminary concepts for subsequent analysis.

\subsection{Monotonicity, cocoercivity, and nonexpansiveness of operators}

We say that an operator $\opF\colon \reals^d \to \reals^d$ is
\begin{itemize}[leftmargin=1.2em,itemsep=0.25em]
    \item[-] \textit{Monotone} if $\inprod{\opF (x) - \opF (y)}{x-y} \ge 0$ for all $x,y \in \reals^d$ 
    \item[-] $\mu$-\textit{strongly monotone} if $\inprod{\opF (x) - \opF (y)}{x-y} \ge \mu \sqnorm{x-y}$ for all $x,y \in \reals^d$
    \item[-] $M$-\textit{Lipschitz} if $\norm{\opF (x) - \opF (y)} \le M\norm{x-y}$ for all $x,y \in \reals^d$ 
    \item[-] $\nicefrac{1}{L}$-\textit{Cocoercive} if $\inprod{\opF (x) - \opF (y)}{x-y} \ge \frac{1}{L} \sqnorm{ \opF (x) - \opF (y)}$ for all $x,y \in \reals^d$ 
\end{itemize}
where $\mu,M,L>0$.
In particular, $\nicefrac{1}{L}$-cocoercivity implies monotonicity and $L$-Lipschitzness, but not vice versa.
If $\opF$ is $M$-Lipschitz and $\mu$-strongly monotone, then it is $\nicefrac{1}{L}$-cocoercive with $L=\nicefrac{M^2}{\mu}$ \citep{FacchineiPang2003_finitedimensional}.
When $\opF$ is $\nicefrac{1}{L}$-cocoercive, the problem~\eqref{eqn:root-finding-problem} can be recast into a fixed-point problem~\eqref{eqn:fixed-point-problem} where $\opT = \opI - \alpha \opF \colon \R^d\to\R^d$ is \textit{nonexpansive}, i.e., $1$-Lipschitz for $0<\alpha \le \frac{2}{L}$ \citep[Proposition~4.39]{BauschkeCombettes2017_convex}, where $\opI \colon \R^d\to\R^d$ is the identity operator.
This is because 
\[
    \Zer(\opF) := \{x \in\reals^d \,|\, \opF (x) = 0\} = \{x\in\reals^d \,|\, \opT (x) = x \} := \Fix(\opT) .
\]

Given an unconstrained minimax problem 
\begin{align}
    \underset{u\in \reals^m}{\text{minimize}} \,\, \underset{v\in \reals^n}{\text{maximize}} \,\, \Phi(u,v)
\end{align}
with a convex-concave (\textit{resp.}\ $\mu$-strongly-convex-strongly-concave) and $M$-smooth $\Phi\colon \reals^m \times \reals^n \to \reals$, its saddle operator
\[
    \opF(u,v) = (\nabla_u \Phi(u,v), -\nabla_v \Phi(u,v))
\]
is monotone (\textit{resp.}\ $\mu$-strongly monotone) and $M$-Lipschitz \citep{FacchineiPang2003_finitedimensional}.

\subsection{Stochastic oracle models}

We consider an unbiased stochastic operator oracle with bounded variance.

\begin{assumption}[Stochastic operator oracle]
\label{assumption:stochastic-operator-oracle}
Given $x\in \R^d$, one can query $\widehat \opF(x;\xi)$ such that
\[
\expec{\widehat \opF(x;\xi)} = \opF(x) ,
\qquad
\expec{\sqnorm{\widehat \opF(x;\xi) - \opF(x)} } \le \sigma^2.
\]
\end{assumption}

For our main result (\cref{theorem:stochastic-Dual-OHM-cocoercive}), we additionally assume the following condition.

\begin{assumption}[Cocoercivity in expectation]
\label{assumption:cocoercivity-in-expectation}
For any $x,y\in \reals^d$, we have
\[
    \inprod{\opF(x) - \opF(y)}{x-y} \ge \mathbb{E} \left[ \frac{1}{L} \sqnorm{\widehat \opF(x;\xi) - \widehat \opF(y;\xi)} \right] .
\]
\end{assumption}

This condition holds for some natural cases, including: \textbf{(i)} stochastic oracle with additive noise $\widehat{\opF}(x;\xi)=\opF(x) + \zeta(\xi)$, where the noise $\zeta(\xi)$ is independent of $x$, \textbf{(ii)} finite sum or expectation models where each sample operator is $\nicefrac{1}{L}$-cocoercive, and as its special case, \textbf{(iii)} sample operators that are each $\mu$-strongly monotone ($\mu > 0$) and $M$-Lipschitz, which implies sample-wise cocoercivity \cite{FacchineiPang2003_finitedimensional}. 
Note that \cref{assumption:cocoercivity-in-expectation} does not capture general monotone and Lipschitz sample operators or the cases where the noise $\zeta(\xi)$ depends on $x$.
Similar assumptions or even stronger samplewise cocoercivity have been commonly used in stochastic optimization \cite{nguyenSARAHNovelMethod2017,allen-zhuKatyushaFirstDirect2018,LoizouBerardGidelMitliagkasLacoste-Julien2021_stochastic,morinCocoercivitySmoothnessBias2022,BeznosikovGorbunovBerardLoizou2023_stochastic,zhang2024communication, yoon2026multiplayer,davisVarianceReductionRootfinding2023,caiVarianceReducedHalpern2024}.

Given batch size $B\ge 1$, we denote $\opF_\cB(x) = \frac{1}{B} \sum_{b=1}^B \widehat \opF(x;\xi_b)$, where $\xi_b$ are conditionally independent samples given $x$ and $\cB=\{\xi_1,\dots,\xi_B\}$.
Then $\opF_\cB(x)$ is an unbiased estimator of $\opF(x)$ with $\expec{\sqnorm{\opF_\cB (x) - \opF(x)}} \le \frac{\sigma^2}{B}$ and $\inprod{\opF(x) - \opF(y)}{x-y} \ge \mathbb{E} \left[ \frac{1}{L} \sqnorm{\opF_\cB (x) - \opF_\cB (y)} \right]$ by Jensen's inequality.
For $0<\alpha \le \frac{2}{L}$, if we let $\widehat \opT(\cdot;\xi) = \opI - \alpha \widehat \opF(\cdot; \xi)$ and $\opT_\cB(\cdot) = \frac{1}{B} \sum_{b=1}^B \widehat \opT(\cdot; \xi_b)$, then these are unbiased estimators of the nonexpansive $\opT = \opI - \alpha\opF$ with variance bounds $\alpha^2 \sigma^2$ and $\frac{\alpha^2 \sigma^2}{B}$, respectively.
Furthermore, we have: $\expec{\sqnorm{\opT_\cB (x) - \opT_\cB (y)}} \le \sqnorm{x-y}$ (square-nonexpansivity in expectation).

\subsection{Known acceleration in fixed-point problems and its extension}

Our general strategy is to reformulate \eqref{eqn:root-finding-problem} with cocoercive $\opF$ into \eqref{eqn:fixed-point-problem} with $\opT = \opI - \alpha\opF$ and build upon deterministic acceleration results.
Consider the following algorithms: given $y_0 \in \reals^d$,
\begin{align}
\label{eqn:OHM}
\tag{OHM}
    y_{k+1} = \frac{1}{k+2} y_0 + \frac{k+1}{k+2} \opT y_k 
\end{align}
and, with the convention $\opT y_{-1} = y_0$,
\begin{align}
\label{eqn:Dual-OHM}
\tag{Dual-OHM}
    y_{k+1} = y_k + \frac{N-k-1}{N-k} \left( \opT y_k - \opT y_{k-1} \right)
\end{align}
Here, \ref{eqn:OHM} is an anytime algorithm whose update is defined for all $k=0,1,\dots$, while for \ref{eqn:Dual-OHM}, the terminal iteration number $N$ is fixed, and the update is defined only for $k=0,\dots,N-2$.
This is a substantive distinction that makes the two algorithms behave differently under stochastic extension (see the discussion in Section~\ref{section:intuitive-explanation} for further details).

\begin{proposition}[Deterministic acceleration \citep{Lieder2021_convergence, Kim2021_accelerated, yoonOptimalAccelerationMinimax2024}]
\label{proposition:deterministic-acceleration}
For a nonexpansive operator $\opT\colon \reals^d \to \reals^d$ with a fixed point $y_\star \in \Fix\opT$ and $N \ge 2$, both \ref{eqn:OHM} and \ref{eqn:Dual-OHM} exhibit the rate
\[
\norm{y_{N-1} - \opT (y_{N-1})}^2 \le \frac{4\norm{y_0-y_\star}^2}{N^2} .
\]
\end{proposition}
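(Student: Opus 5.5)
Both rates reduce to a Lyapunov (potential) argument in which nonexpansiveness enters only through the inequalities $\sqnorm{\opT y_i - \opT y_j} \le \sqnorm{y_i - y_j}$, evaluated on a few chosen pairs (including $y_\star$, where $\opT y_\star = y_\star$) and combined with nonnegative weights. Throughout, write $g_k := y_k - \opT y_k$.

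\textbf{OHM.} I will use the standard potential of \citet{Kim2021_accelerated, Lieder2021_convergence}:
\[
V_k = k^2 \sqnorm{g_{k-1}} + 2k \inprod{g_{k-1}}{y_{k-1}-y_0}, \qquad k \ge 1 .
\]
\begin{itemize}
\item[(i)] Using $y_{k+1} - \opT y_k = \frac{1}{k+2}(y_0 - \opT y_k)$, show $V_{k+1} \le V_k$. The difference $V_k - V_{k+1}$ equals, after expansion, a positive multiple of $\sqnorm{y_{k}-y_{k-1}} - \sqnorm{\opT y_{k}-\opT y_{k-1}} \ge 0$. This is an algebraic identity I would verify by substitution.
\item[(ii)] At the end, combine $V_N \le V_1$ (computed directly) with $\sqnorm{y_{N-1}-y_\star} \ge \sqnorm{\opT y_{N-1}-y_\star}$. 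This gives $\inprod{g_{N-1}}{y_{N-1}-y_\star} \ge \frac12\sqnorm{g_{N-1}}$.
\item[(iii)] Complete the square in $y_0 - y_\star$ (Young's inequality) to obtain $\frac{N^2}{4}\sqnorm{g_{N-1}} \le \sqnorm{y_0-y_\star}$.
\end{itemize}

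\textbf{Dual-OHM.} Rather than verify a new potential, I would use the H-duality of \citet{yoonOptimalAccelerationMinimax2024}. Write both methods in the form $y_{k+1} = y_k - \sum_{j\le k} h_{k+1,j}\, g_j$ with lower-triangular coefficient matrices $H$. Check that the Dual-OHM coefficients are the anti-diagonal transpose $h^{\text{dual}}_{k,j} = h_{N-j,N-k}$ of the OHM coefficients truncated at $N$.

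The OHM proof then amounts to a certificate: nonnegative multipliers $\lambda_{ij}$ on the inequalities $\sqnorm{y_i-y_j} - \sqnorm{\opT y_i - \opT y_j} \ge 0$ (consecutive pairs and $(N-1,\star)$) such that
\[
\sqnorm{y_0-y_\star} - \tfrac{N^2}{4}\sqnorm{g_{N-1}} - \sum \lambda_{ij}(\cdots)
\]
is a sum of squares. Under the anti-transpose, this certificate maps to a valid one for the dual method, with multipliers on the pairs $(\star,0)$ and consecutive pairs.

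Alternatively, I would give a direct proof via the dual potential, proceeding backward in $k$. This potential uses weights $(N-k)$ together with inner products against $y_\star$ through $\opT y_{-1} = y_0$, and is shown nonincreasing via consecutive nonexpansiveness.

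\textbf{Main obstacle.} The hard step is the Dual-OHM certificate: getting the exact constant $4/N^2$ requires checking that the residual quadratic form is identically a perfect square (zero remainder) after the telescoping. I would first compute the cases $N=2,3$ by hand to guess the multipliers, namely $\lambda_{k,k+1} \propto (k+1)(k+2)$ reversed. I would then verify the general identity by induction on the telescoped sum.
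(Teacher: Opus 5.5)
\textbf{The gap is in the Dual-OHM part.} You are right that Dual-OHM's coefficient matrix is the anti-diagonal transpose of OHM's. But the dual certificate is \emph{not} built from consecutive pairs plus the fixed-point inequality at $(\star,0)$, and those inequalities do not imply the bound. Already at $N=2$, take in $\reals$ the values $y_\star=0$, $y_0=1$, $\opT y_0=-1$. Then $y_1=y_0+\frac12(\opT y_0-y_0)=0$, and put $\opT y_1=-2$. We have $|\opT y_1-\opT y_0|=|y_1-y_0|$ and $|\opT y_0-y_\star|=|y_0-y_\star|$, so every inequality your certificate may use holds. Yet $|y_1-\opT y_1|^2=4>1=\frac{4}{N^2}|y_0-y_\star|^2$. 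The violated inequality is the fixed-point one at the terminal iterate, $|\opT y_1-y_\star|\le|y_1-y_\star|$.

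At $N=3$ one checks in the same way that consecutive pairs together with a single fixed-point inequality, placed at $y_0$ or at $y_2$, admit no certificate either. So your alternative backward potential ``shown nonincreasing via consecutive nonexpansiveness'' also fails, and guessing multipliers at $N=2,3$ and then inducting cannot succeed.

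Under H-duality, OHM's consecutive-pair inequalities become inequalities pairing each $y_k$ with the \emph{terminal} iterate $y_{N-1}$, and the fixed-point inequality stays at $y_{N-1}$. This is exactly what the paper uses when it rederives the Dual-OHM case from \cref{lemma:stochastic-Dual-OHM-magic-identity}, deterministically with $\alpha=1$ and $\opF=\opI-\opT$, which is $\frac12$-cocoercive. In your notation, let $z_k=\opT y_{k-1}-y_k$, so $z_0=0$ by the convention $\opT y_{-1}=y_0$. The potential
\[
V_k=-\tfrac{N-k-1}{N-k}\sqnorm{z_k+g_{N-1}}+\tfrac{2}{N-k}\inprod{z_k+g_{N-1}}{y_k-y_{N-1}}
\]
satisfies $V_{N-1}=0$ and
\[
V_k-V_{k+1}=\frac{\sqnorm{y_k-y_{N-1}}-\sqnorm{\opT y_k-\opT y_{N-1}}}{(N-k)(N-k-1)}\ge0 .
\]
Hence $V_0\ge0$, i.e.\ $(N-1)\sqnorm{g_{N-1}}+2\inprod{g_{N-1}}{y_{N-1}-y_0}\le0$. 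This is the same terminal inequality OHM produces (after dividing by $N$), and your steps (ii)--(iii) then finish verbatim. The dual multipliers are $1/((N-k)(N-k-1))$, not reversed $(k+1)(k+2)$.

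\textbf{The OHM potential also needs a correction.} It should be $V_k=k(k-1)\sqnorm{g_{k-1}}+2k\inprod{g_{k-1}}{y_{k-1}-y_0}$. With $k^2$ in place of $k(k-1)$, the identity claimed in (i) is false. For $\opT=-\opI$ on $\reals$ with $y_0=1$, every nonexpansiveness inequality is tight, yet $V_2=0<V_3=\frac43$. Moreover $V_1=\sqnorm{g_0}\neq0$ would spoil the exact constant even if monotonicity held. With $k(k-1)$ one gets $V_1=0$ and $V_k-V_{k+1}=k(k+1)\bigl(\sqnorm{y_k-y_{k-1}}-\sqnorm{\opT y_k-\opT y_{k-1}}\bigr)$, and (ii)--(iii) give $\frac{4}{N^2}$ exactly.
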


With $\opT = \opI - \alpha\opF$ and $\alpha = \frac{1}{L}$, \cref{proposition:deterministic-acceleration} shows that $\norm{\opF(y_{N-1})} = \frac{\norm{y_{N-1} - \opT(y_{N-1})}}{\alpha} \le \epsilon$ is achieved in $\cO\left( \frac{LD}{\epsilon} \right)$ iterations, where $D = \norm{y_0 - y_\star}$.

\begin{proposition}[\citet{cai2022stochastic}, Informal]
\label{proposition:stochastic-OHM-prior-result}
Stochastic OHM (\algname{S-OHM}), which has the update rule $x_{k+1} = \frac{1}{k+2}x_0 + \frac{k+1}{k+2}\opT_{\cB_k}(x_k)$,  maintains the deterministic iteration bound $N=\cO(L\norm{x_0 - x_\star}/\epsilon)$ if $\E \norm{\opF_{\cB_k} (x_k) - \opF (x_k)}^2 \lesssim \frac{\epsilon^2}{k}$. 
Simple mini-batching with $|\cB_k| = \Theta(k\epsilon^{-2})$ yields $\cO(\epsilon^{-4})$ stochastic oracle complexity, and using PAGE variance reduction, this can be reduced to $\cO(\epsilon^{-3})$.
\end{proposition}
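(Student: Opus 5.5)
The plan is to run the standard potential-function analysis of Halpern iteration with an explicit error term, and then read off the complexity from the required error schedule. Write $e_k := \opT_{\cB_k}(x_k) - \opT(x_k) = -\alpha(\opF_{\cB_k}(x_k) - \opF(x_k))$, so that \algname{S-OHM} is exactly the deterministic \ref{eqn:OHM} recursion $x_{k+1} = \frac{1}{k+2}x_0 + \frac{k+1}{k+2}(\opT x_k + e_k)$. Since $\opF$ is $\nicefrac1L$-cocoercive, I will use the Diakonikolas-type Lyapunov function
\[
V_k = \frac{k(k+1)}{2L}\sqnorm{\opF(x_k)} + (k+1)\inprod{\opF(x_k)}{x_k - x_0}.
\]
In the noiseless case $V_{k+1}\le V_k$ follows from a single application of cocoercivity between $x_{k+1}$ and $x_k$. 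At the end, $V_N \ge \frac{N(N+1)}{2L}\sqnorm{\opF(x_N)} - (N+1)\norm{\opF(x_N)}\,\norm{x_0-x_\star}$, using monotonicity with $x_\star$. Solving this quadratic inequality gives $\norm{\opF(x_N)} = \cO(LD/N)$, consistent with \cref{proposition:deterministic-acceleration}.

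With noise, I will redo the one-step computation and keep every term that contains $e_k$. Substituting $x_{k+1}-x_k$ into the cocoercivity inequality leaves a cross term of the form $c\,(k+1)\inprod{\opF(x_{k+1})}{e_k}$. This term is not mean-zero, because $x_{k+1}$ depends on $e_k$. I will handle it by splitting $\opF(x_{k+1}) = \opF(x_k) + (\opF(x_{k+1})-\opF(x_k))$:
\begin{itemize}
\item the first piece is conditionally mean-zero and vanishes in expectation;
\item the second piece is bounded with Young's inequality, using $\norm{\opF(x_{k+1})-\opF(x_k)}\le L\norm{x_{k+1}-x_k}$, and absorbed into the $\frac{k(k+1)}{2L}\sqnorm{\cdot}$ gain at the price of a term $\cO\bigl((k+1)L\,\E\sqnorm{e_k}/\alpha^2\bigr)$.
\end{itemize}
Summing gives $\E V_N \le V_0 + C\sum_{k<N}(k+1)L\,\E\sqnorm{\opF_{\cB_k}(x_k)-\opF(x_k)}$.

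If $\E\sqnorm{\opF_{\cB_k}(x_k)-\opF(x_k)}\lesssim \epsilon^2/k$, the sum is $\cO(NL\epsilon^2)$. After dividing by $N^2/L$, this contributes only $\cO(L^2\epsilon^2/N)\lesssim\epsilon^2$ once $N\gtrsim L$. Hence $\E\norm{\opF(x_N)}\lesssim LD/N+\epsilon$, which gives $N=\cO(LD/\epsilon)$. This accounting is the main obstacle. The constants in the Young split must be tuned so that the residual weight is $(k+1)$ rather than $(k+1)^2$. Weight $(k+1)^2$ would force error $\epsilon^2/k^2$ and destroy the claimed schedule. I expect to need a more careful choice of which iterate's $\opF$ the cross term is paired with.

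For complexity, simple mini-batching needs $|\cB_k|=\Theta(k\sigma^2\epsilon^{-2})$, so the total is $\sum_{k\le N}k\epsilon^{-2}=\Theta(N^2\epsilon^{-2})=\cO(\epsilon^{-4})$.

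For PAGE, the estimator is $g_{k+1}=\opF_{S_1}(x_{k+1})$ with probability $p_k$, and $g_{k+1}=g_k+\opF_{S_2}(x_{k+1})-\opF_{S_2}(x_k)$ otherwise. Its error recursion is
\[
\E\sqnorm{g_{k+1}-\opF(x_{k+1})}\le(1-p_k)\E\sqnorm{g_k-\opF(x_k)}+\frac{p_k\sigma^2}{S_1}+\frac{L^2}{S_2}\E\sqnorm{x_{k+1}-x_k}.
\]
Halpern steps satisfy $\norm{x_{k+1}-x_k}=\cO(D/k)$, which I will verify from the same potential. Choosing $p_k\asymp 1/k$, $S_1\asymp k\epsilon^{-2}$ and $S_2\asymp \epsilon^{-1}$ keeps the error at $\cO(\epsilon^2/k)$. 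The expected per-iteration cost is then $p_kS_1+S_2=\cO(\epsilon^{-2})$, so the total over $N=\cO(\epsilon^{-1})$ iterations is $\cO(\epsilon^{-3})$.
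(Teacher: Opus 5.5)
The paper gives no proof of this proposition; it is quoted informally from \citet{cai2022stochastic}. Your plan therefore has to stand on its own. Its architecture (Diakonikolas potential, split off a conditionally mean-zero piece, Young on the rest) is the right one. The gap is the one-step accounting, which is wrong, and the obstacle you identify is misdiagnosed.

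Write $\delta_k := \opF_{\cB_k}(x_k)-\opF(x_k)$ and take $\alpha=\frac1L$, which your coefficient $\frac{k(k+1)}{2L}$ presupposes. To produce the $\frac{(k+1)(k+2)}{2L}\sqnorm{\opF(x_{k+1})}$ term of $V_{k+1}$, the cocoercivity inequality between $x_k$ and $x_{k+1}$ must be scaled by $(k+1)(k+2)$. Combining it with $(k+2)(x_{k+1}-x_0)=(k+1)\bigl(x_k-x_0-\alpha\opF_{\cB_k}(x_k)\bigr)$ gives exactly
\begin{align*}
V_{k+1} \le V_k &- \frac{k+1}{L}\inprod{\opF(x_k)}{\delta_k} - \frac{(k+1)(k+2)}{L}\inprod{\opF(x_{k+1})-\opF(x_k)}{\delta_k} \\
&- \frac{(k+1)(k+2)}{2L}\sqnorm{\opF(x_{k+1})-\opF(x_k)} .
\end{align*}
So the noise couples to $\opF(x_{k+1})$ with weight $(k+1)(k+2)$, not $c(k+1)$. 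The last term is the unused half of cocoercivity: the telescoping consumes only $\frac{1}{2L}$ of the available $\frac1L$. That slack, not Lipschitzness or the $\frac{k(k+1)}{2L}\sqnorm{\cdot}$ part of $V_k$, absorbs the cross term, and it exists only because $\alpha<\frac2L$. Young then gives $\expec{V_{k+1}}\le\expec{V_k}+\frac{(k+1)(k+2)}{2L}\expec{\sqnorm{\delta_k}}$.

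This quadratic weight is not an obstacle; it is exactly what the $\epsilon^2/k$ schedule offsets. Use $V_0=0$ and $V_N\ge\frac{N(N+1)}{2L}\sqnorm{\opF(x_N)}-(N+1)D\norm{\opF(x_N)}$, then divide by $\frac{N(N+1)}{2L}$. The error left over is $\frac{1}{N(N+1)}\sum_{k<N}(k+1)(k+2)\expec{\sqnorm{\delta_k}}=\cO(\epsilon^2)$ when $\expec{\sqnorm{\delta_k}}\lesssim\epsilon^2/(k+1)$. Hence $\expec{\norm{\opF(x_N)}}\lesssim LD/N+\epsilon$ with no further idea needed.

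Your claim that weight $(k+1)^2$ forces $\epsilon^2/k^2$ is false. The linear weight you hope to reach by a cleverer pairing is unattainable and would prove too much. It would make a constant error level suffice; your own arithmetic shows the $\epsilon^2/k$ schedule would then be overkill by a factor $N$. That would mean constant-batch \algname{S-OHM} converges, contradicting the paper's Section~\ref{section:intuitive-explanation}, where the \algname{S-OHM} weights are $\nu_{j+1,j}=\frac{\alpha j(j+1)}{2N}$, and Experiment~1. Your residual $(k+1)L\,\expec{\sqnorm{\delta_k}}$ also has the wrong units relative to $V_k$, which scales like $\sqnorm{\opF}/L$. That is why you ended up comparing $N$ with $L$.

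A secondary error is in the PAGE part: $S_2\asymp\epsilon^{-1}$ is too small. With $\expec{\sqnorm{x_{k+1}-x_k}}=\cO(D^2/k^2)$, $p_k=\frac{2}{k+2}$ and $S_1\asymp k\sigma^2\epsilon^{-2}$, unrolling your recursion gives error $\approx\frac1k\bigl(\epsilon^2+L^2D^2/S_2\bigr)$. You therefore need $S_2\gtrsim L^2D^2\epsilon^{-2}$, and $p_k\ge a/k$ with $a>1$ to avoid a log factor. The per-iteration cost is still $\cO(\epsilon^{-2})$, so the $\cO(\epsilon^{-3})$ total survives.
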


\section{Stochastic Dual-Anchor Acceleration}

In this section, we analyze \textit{Stochastic Dual-OHM (S-Dual-OHM)}: with $\opT_{\cB_{-1}}(x_{-1}) = x_0$,
\begin{align}
\label{eqn:stochastic-Dual-OHM}
\tag{S-Dual-OHM}
    x_{k+1} = x_k + \frac{N-k-1}{N-k} \left(\opT_{\cB_k} (x_k) - \opT_{\cB_{k-1}} (x_{k-1}) \right) 
\end{align}
for $k=0,\dots,N-2$, and present the following main result of this paper.

\begin{theorem}
\label{theorem:stochastic-Dual-OHM-cocoercive}
Consider the root-finding problem~\eqref{eqn:root-finding-problem} with a solution $x_\star$, satisfying Assumptions~\ref{assumption:stochastic-operator-oracle} and \ref{assumption:cocoercivity-in-expectation}. 
Then for $0<\alpha\le\frac{2}{L}$, \ref{eqn:stochastic-Dual-OHM} with $\opT_{\cB_k} = \opI - \alpha\opF_{\cB_k}$ with $|\cB_k| \equiv B$ satisfies
\begin{align*}
    \expec{\norm{\opF(x_{N-1})}}^2 \le \expec{\sqnorm{\opF(x_{N-1})}} \le \frac{4\sqnorm{x_0 - x_\star}}{\alpha^2 N^2} + \frac{6\sigma^2}{B} .
\end{align*}
\end{theorem}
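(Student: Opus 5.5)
The plan is to treat \ref{eqn:stochastic-Dual-OHM} as a perturbation of the deterministic \ref{eqn:Dual-OHM} and follow the certificate-based proof of \cref{proposition:deterministic-acceleration}. That proof writes the gap
\[
\frac{4\sqnorm{y_0-y_\star}}{N^2}-\sqnorm{y_{N-1}-\opT y_{N-1}}
\]
as a nonnegative combination of nonexpansiveness inequalities $\sqnorm{y_i-y_j}-\sqnorm{\opT y_i-\opT y_j}\ge 0$ (over consecutive pairs $(k-1,k)$ and pairs $(k,\star)$), plus a sum of squares. For the dual-anchor method I expect this combination to be expressible as a Lyapunov function $V_k$. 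It should be built from $\opT y_k-\opT y_{k-1}$, the residual-like vector $y_k-\opT y_k$, and the distance to $y_\star$, with weights depending on $N-k$, so that $V_{k+1}\le V_k$ and $V_0\le 4\sqnorm{y_0-y_\star}/N^2$. First I would reconstruct this $V_k$ explicitly, taking the pairwise weights from the H-dual correspondence with \ref{eqn:OHM}.

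Next I rerun the same algebra for the iterates $x_k$ with $\opT_{\cB_k}(x_k)$ in place of $\opT y_k$. Let $\cF_k$ be the $\sigma$-field generated by $x_0,\dots,x_k$ and $\cB_0,\dots,\cB_{k-1}$, so that $x_k$ is $\cF_k$-measurable while $\cB_k$ is fresh. The pairwise inequalities are now used only in expectation. For $(k,\star)$ I use \cref{assumption:cocoercivity-in-expectation} conditionally on $\cF_k$, in the form $\condexp{\sqnorm{\opT_{\cB_k}x_k-\opT_{\cB_k}x_\star}}{\cF_k}\le\sqnorm{x_k-x_\star}$, and pay $\alpha^2\sigma^2/B$ to replace $\opT_{\cB_k}x_\star$ by $x_\star$.

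For consecutive pairs the samples differ: $\opT_{\cB_k}(x_k)$ and $\opT_{\cB_{k-1}}(x_{k-1})$ use independent batches. I would write
\[
\opT_{\cB_k}(x_k)=\opT x_k+e_k,\qquad \condexp{e_k}{\cF_k}=0,\qquad \condexp{\sqnorm{e_k}}{\cF_k}\le \frac{\alpha^2\sigma^2}{B},
\]
and then expand each inequality in the certificate. Every cross term $\inprod{e_k}{\cdot}$ whose partner is $\cF_k$-measurable vanishes in expectation. What remains are (i) quadratic terms $\sqnorm{e_k}$ and (ii) cross terms $\inprod{e_k}{e_{k-1}}$ or $\inprod{e_k}{x_{k+1}-\cdots}$, where $x_{k+1}$ itself contains $e_k$.

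The key structural point, and the main obstacle, is bounding the accumulated error. In \ref{eqn:OHM} the noise enters with weight $\frac{k+1}{k+2}$ and propagates undamped, so the errors pile up. In \ref{eqn:Dual-OHM} each $e_k$ appears through the difference $\opT_{\cB_k}x_k-\opT_{\cB_{k-1}}x_{k-1}$ with weight $\frac{N-k-1}{N-k}$, and the Lyapunov weights on the late iterates scale like $1/(N-k)^2$ relative to the early ones.

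I would therefore track the coefficient of $\sqnorm{e_k}$ in the telescoped sum $\sum_k\E[V_{k+1}-V_k]$, with the goal of showing it is $O(1)$ overall rather than $O(N)$. The natural guess is a telescoping or $\sum_k 1/(N-k)^2$-type series bounded by a small constant, giving roughly $4\sigma^2/B$ after dividing by $\alpha^2$. The leftover cross terms $\inprod{e_k}{e_{k-1}}$ I would handle with Young's inequality, absorbing them into the squared terms. If the natural $V_k$ does not absorb them, my first fallback is to re-weight the consecutive-pair inequalities slightly.

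Finally I convert the bound on $\E\sqnorm{x_{N-1}-\opT_{\cB_{N-1}}x_{N-1}}$, or on whatever terminal quantity $V_{N-1}$ controls, into one on $\alpha^2\E\sqnorm{\opF(x_{N-1})}$. Since the estimator is conditionally unbiased, this costs at most another $\alpha^2\sigma^2/B$ (via $\E\sqnorm{\opF_{\cB}(x)}=\sqnorm{\opF(x)}+\text{variance}$). Adding up the contributions should give the constant $6\sigma^2/B$, and dividing by $\alpha^2$ yields the stated bound. The first inequality $\expec{\norm{\opF(x_{N-1})}}^2\le\expec{\sqnorm{\opF(x_{N-1})}}$ is Jensen's inequality.
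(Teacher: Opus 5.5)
There is a genuine gap, and it sits exactly where the difficulty of the theorem lies.

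\textbf{The certificate has the wrong shape.} Consecutive pairs $(k-1,k)$ plus $(k,\star)$ pairs with a forward-decreasing $V_k$ is the pattern behind OHM; compare the S-OHM identity with $\cQ_{j+1,j}$ in Section~\ref{section:intuitive-explanation}. The certificate for Dual-OHM pairs every iterate with the \emph{terminal} one, and this is also what reconstructing it from OHM via H-duality gives. It is the exact identity of \cref{lemma:stochastic-Dual-OHM-magic-identity},
\[
0=\tfrac{(N-1)\alpha^2}{4}\sqnorm{\opF(x_{N-1})}+\tfrac{\alpha}{2}\inprod{\opF(x_{N-1})}{x_{N-1}-x_0}+\sum_{j=1}^{N-1}\lambda_{N,j}\cQ_{N,j},\qquad \lambda_{N,j}=\tfrac{N\alpha}{2(N-j)(N-j+1)},
\]
where $\cQ_{N,j}$ is the cocoercivity expression for the pair $\bigl(x_{j-1},x_{N-1}\bigr)$, evaluated at $\opF_{\cB_{j-1}}(x_{j-1})$ and $\opF(x_{N-1})$. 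The underlying potential involves $x_{N-1}$ and $\opF(x_{N-1})$, contains no $x_\star$, and telescopes to $V_{N-1}=0$. The point $x_\star$ enters only once, through cocoercivity at $x_{N-1}$. Your bookkeeping guess is right, though: normalized by $N$, the weights form the telescoping series $\sum_j\frac{1}{(N-j)(N-j+1)}=1-\frac1N$.

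\textbf{The nonlocal cross term is not handled.} After splitting $\opF_{\cB_{j-1}}(x_{j-1})=\opF(x_{j-1})+e_{j-1}$, cocoercivity of $\opF$ and $\condexp{e_{j-1}}{\cF_{j-1}}=0$ remove everything except $\frac{\alpha\sigma^2}{2B}$ and $\expec{\inprod{e_{j-1}}{\opT x_{N-1}}}$, for every $j$.
\begin{itemize}
\item[$\diamond$] This term is not mean-zero, because $x_{N-1}$ depends on $\cB_{j-1}$ through all later nonlinear updates.
\item[$\diamond$] Young's inequality cannot absorb it, because $\opT x_{N-1}$ is not small.
\item[$\diamond$] Your list of leftovers only covers one-step dependence, and the $\inprod{e_k}{e_{k-1}}$ terms on it are already mean-zero.
\end{itemize}

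\textbf{The missing idea is leave-one-out stability.} Let $x^{(j-1)}_{N-1}$ be the terminal iterate when only $\cB_{j-1}$ is replaced by an independent copy. Then $\expec{\inprod{e_{j-1}}{\opT x^{(j-1)}_{N-1}}}=0$, so
\[
\expec{\inprod{e_{j-1}}{\opT x_{N-1}}}\le\tfrac{\sigma}{\sqrt B}\bigl(\expec{\sqnorm{x_{N-1}-x^{(j-1)}_{N-1}}}\bigr)^{1/2}.
\]
You must then prove $\expec{\sqnorm{x_{N-1}-x^{(j-1)}_{N-1}}}\le\frac{\alpha^2\sigma^2}{B}$, uniformly in $j$ and $N$.
\begin{itemize}
\item[$\diamond$] This comes from the representation $x_k=\frac1N x_0+\sum_{t<k}w_{k,t}\opT_{\cB_t}(x_t)$. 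Here $w_{k,t}=\frac{1}{(N-t-1)(N-t)}$ for $t\le k-2$ and $w_{k,k-1}=\frac{N-k}{N-k+1}$, with total weight $1-\frac1N$.
\item[$\diamond$] Jensen's inequality and an induction then give $\frac{\alpha^2\sigma^2}{B}\bigl(1+\frac{(N-k)(N-k-1)}{(N-s)(N-s-1)}\bigr)$. The tiny weights on old terms are the rigorous form of your damping heuristic.
\item[$\diamond$] The induction step needs $\expec{\sqnorm{\opT_{\cB_t}(x)-\opT_{\cB_t}(x')}}\le\sqnorm{x-x'}$ for the \emph{same} batch at two different points, since the coupled runs share all later batches.
\end{itemize}
This is the one place where the sample-level strength of \cref{assumption:cocoercivity-in-expectation} is needed. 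Your plan invokes that assumption only at the $(k,\star)$ pairs, where cocoercivity of $\opF$ already suffices. That is a sign the real obstacle has not been located.

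With the stability bound, each $\lambda_{N,j}\cQ_{N,j}$ costs at most $\lambda_{N,j}\frac{3\alpha\sigma^2}{2B}$, for a total of $\frac{3(N-1)\alpha^2\sigma^2}{4B}$. Dividing by the main coefficient $\frac{N\alpha^2}{8}$ gives the $\frac{6\sigma^2}{B}$ term.
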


This shows that we will have $\expec{\norm{\opF(x_{N-1})}} \le \epsilon$ with $N = \Theta(1/\epsilon)$ and $B = \Theta(1/\epsilon^2)$, so in total, $\cO(1/\epsilon^3)$ oracle complexity.
This also proves an equivalent bound on the expected fixed-point residual $\expec{\norm{x_{N-1} - \opT (x_{N-1})}}^2 \le \frac{4\sqnorm{x_0 - x_\star}}{N^2} + \frac{6\alpha^2 \sigma^2}{B}$.
While we defer technical details to Appendix~\ref{section:appendix-missing-proofs}, we provide the outline of key arguments below.
Then, we show that with the additional assumption of strong monotonicity of $\opF$, the total complexity can be reduced to $\widetilde \cO(1/\epsilon^2)$ with early stopping.

\subsection{Proof outline for \texorpdfstring{\cref{theorem:stochastic-Dual-OHM-cocoercive}}{Theorem 4.1}}

We start with showing the identity characterizing the S-Dual-OHM algorithm.

\begin{lemma}
\label{lemma:stochastic-Dual-OHM-magic-identity}
For \ref{eqn:stochastic-Dual-OHM} with $\opT_{\cB_k} = \opI - \alpha\opF_{\cB_k}$, the following identity holds:
\begin{align*}
    0 & = \frac{(N-1)\alpha^2}{4} \sqnorm{\opF (x_{N-1})} + \frac{\alpha}{2} \inprod{\opF (x_{N-1})}{x_{N-1} - x_0} + \sum_{j=1}^{N-1} \lambda_{N,j} \cQ_{N,j}
\end{align*}
where for $j=1,\dots,N-1$, $\lambda_{N,j} = \frac{N\alpha}{2(N-j)(N-j+1)}$ and
\[
    \cQ_{N,j} = \inprod{x_{j-1} - x_{N-1}}{\opF_{\cB_{j-1}}(x_{j-1}) - \opF(x_{N-1})} - \frac{\alpha}{2} \sqnorm{\opF_{\cB_{j-1}} (x_{j-1}) - \opF(x_{N-1})} .
\] 
\end{lemma}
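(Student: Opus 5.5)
This is a purely algebraic identity: no expectations appear, and it holds for any realization of the samples. I would prove it by expressing every iterate through the stochastic operator values. Write $g_j := \opF_{\cB_j}(x_j)$ for $j=0,\dots,N-2$, and set $g := \opF(x_{N-1})$. Note that $\opT_{\cB_k}(x_k) - \opT_{\cB_{k-1}}(x_{k-1}) = x_k - x_{k-1} - \alpha(g_k - g_{k-1})$ for $k\ge1$. For $k=0$ the convention gives $\opT_{\cB_0}(x_0) - x_0 = -\alpha g_0$.

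\textbf{Step 1 (closed form of the iterates).} I would first verify by induction that
\[
x_k - x_0 = -\alpha \sum_{j=0}^{k-1} c_{k,j}\, g_j ,
\]
with explicit rational coefficients $c_{k,j}$ depending only on $N$. The recursion can be unrolled by introducing $z_k := x_k - \opT_{\cB_{k-1}}(x_{k-1})$ (the ``dual'' displacement). Then $x_{k+1} - \opT_{\cB_k}(x_k) = x_k - \opT_{\cB_{k-1}}(x_{k-1}) - \frac{1}{N-k}(\opT_{\cB_k}(x_k) - \opT_{\cB_{k-1}}(x_{k-1}))$, which telescopes nicely. This should give a formula in which $x_{N-1} - x_0$ equals $-\alpha$ times a weighted sum of the $g_j$, with weights related to $\lambda_{N,j+1}$. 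Indeed, $\sum_{j=1}^{N-1}\frac{N}{(N-j)(N-j+1)} = N\left(1-\frac1N\right) = N-1$ hints at such a structure. This step is exactly the deterministic computation in \citep{yoonOptimalAccelerationMinimax2024} for Dual-OHM with $\opT y_j$ replaced by the sampled values. Since that identity never uses any property of $\opT$, it transfers verbatim.

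\textbf{Step 2 (expand the right-hand side).} Substitute $x_{j-1}-x_{N-1} = (x_{j-1}-x_0) - (x_{N-1}-x_0)$ into each $\cQ_{N,j}$. The whole right-hand side then becomes a quadratic form in the vectors $g_0,\dots,g_{N-2},g$ whose coefficients are rational in $N$ and proportional to $\alpha^2$. It remains to check that every coefficient vanishes. The coefficients come in three types: the $\sqnorm{g}$ coefficient, the $\inprod{g}{g_i}$ coefficients, and the $\inprod{g_i}{g_l}$ coefficients.

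\textbf{Step 3 (check each type).}
\begin{itemize}
\item For $\sqnorm{g}$, the contributions are $\frac{(N-1)\alpha^2}{4}$ from the first term and $-\frac{\alpha}{2}\sum_j\lambda_{N,j} = -\frac{\alpha^2(N-1)}{4}$ from the $\cQ_{N,j}$; these cancel immediately.
\item The $\inprod{g}{g_i}$ coefficients collect the $\frac{\alpha}{2}\inprod{g}{x_{N-1}-x_0}$ term together with the cross terms of each $\cQ_{N,j}$.
\item The $\inprod{g_i}{g_l}$ coefficients arise only inside $\sum_j \lambda_{N,j}\bigl(\inprod{x_{j-1}-x_0}{g_{j-1}} - \frac{\alpha}{2}\sqnorm{g_{j-1}}\bigr)$, plus terms coupling $g_{j-1}$ with $x_{N-1}-x_0$.
\end{itemize}

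\textbf{Main obstacle.} The hard part is verifying these last two families of cancellations, which amounts to a telescoping identity among the $c_{k,j}$ and $\lambda_{N,j}$. My first attempt would be to avoid coefficient bookkeeping altogether. I would instead prove a one-step recursion by induction on the horizon: define a partial potential $V_k$ equal to the sum restricted to $j\le k$ plus suitable boundary terms, and show $V_{k}-V_{k-1}=0$ using the update rule directly. If that proves awkward, I would fall back on the deterministic Dual-OHM identity from \citep{yoonOptimalAccelerationMinimax2024}, applied to the values $\opT_{\cB_j}(x_j)$ at $j\le N-2$ and $\opT(x_{N-1})$ at the last point. The last point only enters through $x_{N-1}$, never through an update, so substituting $\opF(x_{N-1})$ there is legitimate.
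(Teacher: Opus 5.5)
\textbf{Gap: the lemma's actual content is never proved.} Your framing is right. The identity is purely algebraic, and $\opF(x_{N-1})$ enters only as a free vector that no update ever uses; this is exactly the paper's ``ghost batch'' remark. Your cancellation of the $\sqnorm{\opF(x_{N-1})}$ coefficients via $\sum_j\lambda_{N,j}=\frac{\alpha(N-1)}{2}$ is also correct. But the proposal stops where the content of the lemma lies:
\begin{itemize}
\item Step 1 is never carried out. Its outcome is also not what you predict: in fact $x_{N-1}-x_0=-\frac{\alpha}{2}\sum_{j=0}^{N-2}\opF_{\cB_j}(x_j)$, with uniform weights.
\item The two families of cross-term cancellations in Step 3 are never checked.
\item The ``partial potential plus suitable boundary terms'' leaves unspecified the one object whose construction is the proof. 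The recursion must also run in $k$ at fixed $N$: the trajectory depends on $N$ through the coefficients $\frac{N-k-1}{N-k}$, so induction on the horizon itself is not available.
\item Falling back on the deterministic Dual-OHM certificate does not close the gap. The transfer argument is sound, but it presupposes an equality of exactly this $x_\star$-free form (pairs $(j-1,N-1)$, weights $\lambda_{N,j}$, no extra terms), which you neither state nor derive.
\end{itemize}

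\textbf{The missing ingredient} is a potential anchored at the terminal quantities $x_{N-1}$ and $\opF(x_{N-1})$. Use the paper's normalization:
\begin{itemize}
\item $g_j=\frac{\alpha}{2}\opF_{\cB_{j-1}}(x_{j-1})$ for $j\le N-1$, and $g_N=\frac{\alpha}{2}\opF(x_{N-1})$;
\item $u_j=x_{j-1}-g_j$;
\item $z_k=\opT_{\cB_{k-1}}(x_{k-1})-x_k$, which is minus your $z_k$.
\end{itemize}
The update then gives $z_0=0$, $z_{k+1}=\frac{N-k-1}{N-k}z_k-\frac{2}{N-k}g_{k+1}$, and $x_{k+1}=x_k-2g_{k+1}-z_{k+1}$. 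The paper takes
\[
V_k=-\frac{N-k-1}{N-k}\sqnorm{z_k+2g_N}+\frac{2}{N-k}\inprod{z_k+2g_N}{x_k-x_{N-1}} .
\]
This satisfies $V_{N-1}=0$ and $V_0=-\frac{4}{N}\bigl((N-1)\sqnorm{g_N}+\inprod{g_N}{x_{N-1}-x_0}\bigr)$. Substituting the two recursions gives $V_k-V_{k+1}=\frac{4}{(N-k)(N-k-1)}\inprod{u_{k+1}-u_N}{g_{k+1}-g_N}$. Telescoping over $k=0,\dots,N-2$, multiplying by $\frac{N}{4}$, and using $\inprod{u_j-u_N}{g_j-g_N}=\frac{\alpha}{2}\cQ_{N,j}$ yields the lemma.

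In your head-sum formulation, the boundary term that works is exactly $\frac{N}{4}V_k$. Note that it must involve $x_{N-1}$ and $g_N$, since every summand $\cQ_{N,j}$ does. Without that choice, or a fully executed coefficient matching, what you have is a plan rather than a proof.
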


Here, when the problem is deterministic so that $\opF_{\cB_k} = \opF$ for all $k=0,\dots,N-1$ and $0<\alpha\le\frac{2}{L}$, then by cocoercivity $\cQ_{N,j} \ge 0$ for all $j=1,\dots,N-1$, so $\sum_{j=1}^{N-1} \lambda_{N,j} \cQ_{N,j} \ge 0$. 
The resulting inequality, together with Young's inequality and one more application of cocoercivity: $\inprod{\opF(x_{N-1})}{x_{N-1}-x_\star} \ge \frac{1}{L} \sqnorm{\opF(x_{N-1})} \ge \frac{\alpha}{2} \sqnorm{\opF(x_{N-1})}$, yields
\begin{align}
\label{eqn:deterministic-case-final-bound}
    0 & \ge \frac{(N-1)\alpha^2}{4} \sqnorm{\opF (x_{N-1})} + \frac{\alpha}{2} \inprod{\opF (x_{N-1})}{x_{N-1} - x_0} \\
    & \ge \frac{N\alpha^2}{4} \sqnorm{\opF (x_{N-1})} - \frac{N\alpha^2}{8} \sqnorm{\opF (x_{N-1})} - \frac{1}{2N} \sqnorm{x_\star - x_0}
\end{align}
which implies $\sqnorm{\opF (x_{N-1})} \le \frac{4\sqnorm{x_0 - x_\star}}{\alpha^2 N^2}$.

On the other hand, in the stochastic setting, we do not have $\cQ_{N,j} \ge 0$ due to noise, and these quantities are not local as in typical stochastic analyses, making the convergence proof seemingly much harder.
However, we can isolate the dependence on stochastic errors at each iteration and control them through the following two lemmas.
These key results allow us to follow the reasoning similar to the deterministic case above.

\begin{lemma}
\label{lemma:extract-dependence-on-lower-bound}
Under the setting of \cref{theorem:stochastic-Dual-OHM-cocoercive}, let $e_k = \opF_{\cB_k}(x_k) - \opF(x_k)$ for $k=0,\dots,N-2$. Then
{\small
\begin{align}
\label{eqn:expectation-lower-bound-second}
\begin{aligned}
    0 & \ge \expec{\frac{(N-1)\alpha^2}{4} \sqnorm{\opF(x_{N-1})} + \frac{\alpha}{2} \inprod{\opF (x_{N-1})}{x_{N-1} - x_0}} -  \sum_{j=1}^{N-1} \lambda_{N,j} \left( \frac{\alpha\sigma^2}{2B} + \expec{\inprod{e_{j-1}}{\opT x_{N-1}}} \right) .
\end{aligned}
\end{align}
}
\end{lemma}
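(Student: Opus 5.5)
The plan is to take expectations in the identity of \cref{lemma:stochastic-Dual-OHM-magic-identity} and bound each $\expec{\cQ_{N,j}}$ from below. Since $\lambda_{N,j} > 0$, it suffices to show, for every $j = 1,\dots,N-1$,
\[
    \expec{\cQ_{N,j}} \ge -\frac{\alpha\sigma^2}{2B} - \expec{\inprod{e_{j-1}}{\opT x_{N-1}}} .
\]
Substituting this into the expected identity gives exactly \eqref{eqn:expectation-lower-bound-second}.

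To get this bound, I would decompose $\opF_{\cB_{j-1}}(x_{j-1}) - \opF(x_{N-1}) = a + e_{j-1}$, where $a = \opF(x_{j-1}) - \opF(x_{N-1})$. Expanding the square gives
\[
    \cQ_{N,j} = \Bigl( \inprod{x_{j-1}-x_{N-1}}{a} - \tfrac{\alpha}{2}\sqnorm{a} \Bigr) + \inprod{e_{j-1}}{x_{j-1}-x_{N-1} - \alpha a} - \tfrac{\alpha}{2}\sqnorm{e_{j-1}} .
\]
The vector paired with $e_{j-1}$ is $x_{j-1} - x_{N-1} - \alpha a = \opT x_{j-1} - \opT x_{N-1}$, which is the reason for reformulating with $\opT = \opI - \alpha\opF$.

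I would then handle the three groups separately.
\begin{enumerate}
\item The first bracket is nonnegative pointwise. Applying Jensen to \cref{assumption:cocoercivity-in-expectation} gives $\mathbb{E}_\xi\sqnorm{\widehat\opF(x;\xi)-\widehat\opF(y;\xi)} \ge \sqnorm{\opF(x)-\opF(y)}$, so the mean operator $\opF$ is $\nicefrac{1}{L}$-cocoercive. This holds for every pair of points, including the random pair $x_{j-1}, x_{N-1}$. Hence the bracket is at least $(\tfrac1L - \tfrac\alpha2)\sqnorm{a} \ge 0$ because $\alpha \le 2/L$.
\item The last term has expectation at least $-\frac{\alpha\sigma^2}{2B}$ by the mini-batch variance bound.
\item In the cross term, $\expec{\inprod{e_{j-1}}{\opT x_{j-1}}} = 0$. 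This follows from the tower property, conditioning on the history up to $x_{j-1}$: $\opT x_{j-1}$ is measurable with respect to that history, and $\cB_{j-1}$ is sampled independently given $x_{j-1}$, so $e_{j-1}$ has conditional mean zero. The only remaining piece is $-\expec{\inprod{e_{j-1}}{\opT x_{N-1}}}$.
\end{enumerate}

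The term $-\expec{\inprod{e_{j-1}}{\opT x_{N-1}}}$ is the genuine obstacle, and it cannot be removed here. The future iterate $x_{N-1}$ depends on $e_{j-1}$ through the recursion, so no conditioning argument kills it. The lemma deliberately leaves it isolated, to be controlled later by exploiting the telescoping structure of the S-Dual-OHM update.

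The points I would state carefully are therefore twofold. First, cocoercivity is applied pointwise to random arguments, which is legitimate because it is a deterministic inequality in $(x,y)$. Second, the martingale-type cancellation is used only for the term involving $\opT x_{j-1}$ and not for the term involving $\opT x_{N-1}$.
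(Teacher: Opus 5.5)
Your proposal is correct and follows essentially the same route as the paper: take expectations in the identity of \cref{lemma:stochastic-Dual-OHM-magic-identity}, and split $\opF_{\cB_{j-1}}(x_{j-1}) - \opF(x_{N-1})$ into $\opF(x_{j-1}) - \opF(x_{N-1})$ plus $e_{j-1}$. Then drop the cocoercivity bracket, kill $\inprod{e_{j-1}}{\opT x_{j-1}}$ by conditioning on $\cF_{j-1}$, bound $\expec{\sqnorm{e_{j-1}}}$ by $\sigma^2/B$, and leave $\expec{\inprod{e_{j-1}}{\opT x_{N-1}}}$ isolated for \cref{lemma:S-Dual-OHM-cross-term-expectation-bound}. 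Your explicit derivation of cocoercivity of $\opF$ from \cref{assumption:cocoercivity-in-expectation} via Jensen and unbiasedness is a small step that the paper leaves implicit.
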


\begin{lemma}
\label{lemma:S-Dual-OHM-cross-term-expectation-bound}
Under the setting of \cref{theorem:stochastic-Dual-OHM-cocoercive}, $\expec{\inprod{e_{j-1}}{\opT x_{N-1}}} \le \frac{\alpha\sigma^2}{B}$ for $j=1,\dots,N-1$.
\end{lemma}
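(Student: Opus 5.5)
The plan is a ghost-sample (resampling) argument that isolates the dependence of $x_{N-1}$ on the single batch $\cB_{j-1}$. Let $\cF_{j-1}$ be the $\sigma$-algebra generated by $\cB_0,\dots,\cB_{j-2}$, so that $x_{j-1}$ is $\cF_{j-1}$-measurable and $\condexp{e_{j-1}}{\cF_{j-1}}=0$. Draw an independent copy $\cB_{j-1}'$ of $\cB_{j-1}$. Define a ghost trajectory $(x_k')$ that agrees with $(x_k)$ up to index $j-1$, uses $\cB'_{j-1}$ in place of $\cB_{j-1}$ at step $j-1$, and uses the same batches $\cB_j,\dots,\cB_{N-2}$ afterwards. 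Set $e' = \opF_{\cB'_{j-1}}(x_{j-1})-\opF(x_{j-1})$.

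Since $x'_{N-1}$ is independent of $\cB_{j-1}$ conditionally on $\cF_{j-1}$ (the later batches are conditionally independent of everything before), we have $\expec{\inprod{e_{j-1}}{\opT x'_{N-1}}}=0$. Swapping the roles of $\cB_{j-1}$ and $\cB'_{j-1}$ gives the same identity with the primes exchanged, by exchangeability. Adding the two versions yields the symmetrized identity
\[
2\,\expec{\inprod{e_{j-1}}{\opT x_{N-1}}} = \expec{\inprod{e_{j-1}-e'}{\opT x_{N-1}-\opT x'_{N-1}}}.
\]
By nonexpansiveness of $\opT$ and Cauchy--Schwarz, the right-hand side is at most $\sqrt{\expec{\sqnorm{e_{j-1}-e'}}}\,\sqrt{\expec{\sqnorm{x_{N-1}-x'_{N-1}}}}$. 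Conditional independence gives $\expec{\sqnorm{e_{j-1}-e'}}\le 2\sigma^2/B$. So it suffices to prove the stability bound
\[
\expec{\sqnorm{x_{N-1}-x'_{N-1}}}\le \alpha^2\,\expec{\sqnorm{e_{j-1}-e'}},
\]
which gives $2\expec{\inprod{e_{j-1}}{\opT x_{N-1}}}\le \alpha\cdot 2\sigma^2/B$, as claimed.

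The stability bound is the main obstacle. At step $j-1$ the two runs share $x_{j-1}$ and $\opT_{\cB_{j-2}}(x_{j-2})$, and they differ only through $g_{j-1}=\opT_{\cB_{j-1}}(x_{j-1})$ versus $g'_{j-1}$, where $g_{j-1}-g'_{j-1}=-\alpha(e_{j-1}-e')$. Hence $x_j-x'_j=-\tfrac{N-j}{N-j+1}\alpha(e_{j-1}-e')$. For $k\ge j$, both runs apply the same sample operator $\opT_{\cB_k}$, which is square-nonexpansive in expectation, and the difference obeys
\[
x_{k+1}-x'_{k+1}=(x_k-x'_k)+\tfrac{N-k-1}{N-k}\bigl[(g_k-g'_k)-(g_{k-1}-g'_{k-1})\bigr].
\]
I would first try to unroll the Dual-OHM recursion into its "averaged" form. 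Writing $x_{k+1}-\tfrac{N-k-1}{N-k}g_k = x_k-\tfrac{N-k-1}{N-k}g_{k-1}$ and telescoping should express $x_{k}-x'_{k}$ as a combination of the differences $g_i-g'_i$ for $i\le k$ with nonnegative weights summing to at most one. From that representation, a Lyapunov argument using $\expec{\sqnorm{\opT_{\cB}(u)-\opT_{\cB}(v)}}\le \sqnorm{u-v}$ (applied conditionally on the history, with $u,v$ independent of $\cB_k$) and Jensen's inequality should show that $\expec{\sqnorm{x_k-x'_k}}$ never exceeds the initial injected perturbation $\alpha^2\expec{\sqnorm{e_{j-1}-e'}}$.

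If the direct convexity argument fails, the fallback is to mimic the deterministic H-dual analysis on the difference process. Concretely, I would apply the identity of \cref{lemma:stochastic-Dual-OHM-magic-identity}-type bookkeeping to the pair $(x_k,x'_k)$, replacing cocoercivity with its in-expectation version (Assumption~\ref{assumption:cocoercivity-in-expectation}), which should give the required contraction of the perturbation.
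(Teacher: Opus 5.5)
Your proof is correct, and it reaches the paper's constant by a genuinely different route.

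\textbf{How the paper argues.} The paper also compares against the leave-one-out trajectory and uses $\expec{\inprod{e_{j-1}}{\opT(x^{(j-1)}_{N-1})}}=0$. It does not symmetrize, however. It applies Cauchy--Schwarz directly to $\expec{\inprod{e_{j-1}}{\opT(x_{N-1})-\opT(x^{(j-1)}_{N-1})}}$, pairing $\expec{\sqnorm{e_{j-1}}}\le\sigma^2/B$ with the terminal deviation. To obtain the constant $1$ this way, it must show $\expec{\sqnorm{x_{N-1}-x^{(j-1)}_{N-1}}}\le\alpha^2\sigma^2/B$, which is half of the worst-case injected perturbation $2\alpha^2\sigma^2/B$. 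A mere non-growth bound would give only $\sqrt{2}\,\alpha\sigma^2/B$. This is why \cref{lemma:loo-stability} runs an induction on the explicit profile $\frac{\alpha^2\sigma^2}{B}\bigl(1+\frac{(N-k)(N-k-1)}{(N-s)(N-s-1)}\bigr)$, which requires evaluating $\sum_t w_{\ell,t}(N-t)(N-t-1)$ exactly.

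\textbf{Why non-growth suffices for you.} Your exchangeability step places $e_{j-1}-e'$ on both sides of Cauchy--Schwarz, so non-growth is exactly enough. It follows just as you sketch. Your telescoped form is the paper's convex representation (\cref{lemma:convex-representation}), so $x_k-x_k'=\sum_{t=j-1}^{k-1}w_{k,t}v_t$, where $v_t$ is the difference of the two runs' sampled $\opT$-evaluations at step $t$. The weights are nonnegative with total at most $1-\frac1N$. Write $d_k=\expec{\sqnorm{x_k-x_k'}}$ and $P=\alpha^2\expec{\sqnorm{e_{j-1}-e'}}$. Jensen plus square-nonexpansiveness in expectation (conditioning on a $\sigma$-algebra enlarged by $\cB'_{j-1}$) give $d_k\le w_{k,j-1}P+\sum_{t=j}^{k-1}w_{k,t}d_t$. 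A one-line induction then yields $d_k\le P$ for all $k\ge j$. Neither a Lyapunov function nor your fallback to \cref{lemma:stochastic-Dual-OHM-magic-identity} is needed.

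\textbf{What each route buys.} Your route trades exact weight bookkeeping for symmetrization. It uses only that the weights are nonnegative and sum to at most one, so it would transfer to other schemes with such an averaged representation. The paper's route proves the sharper stability estimate itself. That estimate shows the bound on one batch's influence shrinks along the trajectory, down to $\alpha^2\sigma^2/B$ at the terminal iterate, rather than merely not growing.
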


\cref{lemma:S-Dual-OHM-cross-term-expectation-bound} states that the amount by which the error $e_{j-1}$ propagates to the last iterate and affects the analysis is uniformly bounded by a constant independent of $N$. 
Combining Lemmas~\ref{lemma:extract-dependence-on-lower-bound} and \ref{lemma:S-Dual-OHM-cross-term-expectation-bound}, and using $\sum_{j=1}^{N-1} \lambda_{N,j} = \frac{\alpha(N-1)}{2}$, we can proceed as in \eqref{eqn:deterministic-case-final-bound} to obtain
\begin{align*}
    0 & \ge \expec{\frac{N\alpha^2}{4} \sqnorm{\opF(x_{N-1})} + \frac{\alpha}{2} \inprod{\opF (x_{N-1})}{x_\star - x_0}} - \frac{3(N-1)\alpha^2\sigma^2}{4B} \\
    & \ge \expec{\frac{N\alpha^2}{4} \sqnorm{\opF(x_{N-1})} - \frac{N\alpha^2}{8} \sqnorm{\opF (x_{N-1})} - \frac{1}{2N} \sqnorm{x_0 - x_\star}} - \frac{3(N-1)\alpha^2\sigma^2}{4B} .
\end{align*}
Rearranging, we have the desired bound of \cref{theorem:stochastic-Dual-OHM-cocoercive}.

\subsubsection{Intuition for why S-Dual-OHM is stable under noise}
\label{section:intuitive-explanation}

The previous analysis shows that the net accumulation of error in \ref{eqn:stochastic-Dual-OHM} is proportional to $\sum_{j=1}^{N-1} \lambda_{N,j} = \frac{\alpha(N-1)}{2} = \cO(N)$.
On the other hand, stochastic OHM satisfies the similar identity
\begin{align*}
    0 & = \frac{(N-1)\alpha^2}{4} \sqnorm{\opF (x_{N-1})} + \frac{\alpha}{2} \inprod{\opF(x_{N-1})}{x_{N-1} - x_0} + \sum_{j=1}^{N-1} \nu_{j+1,j} \cQ_{j+1,j}
\end{align*}
where $\cQ_{j+1,j} = \inprod{x_{j-1} - x_{j}}{\opF_{\cB_{j-1}}(x_{j-1}) - \opF_{\cB_{j}}(x_{j})} - \frac{\alpha}{2} \sqnorm{\opF_{\cB_{j-1}} (x_{j-1}) - \opF_{\cB_{j}} (x_{j})}$ but with $\nu_{j+1,j} = \frac{\alpha j(j+1)}{2N}$.
Here each $\cQ_{j+1,j}$ is not nonnegative, but contributes a negative lower bound proportional to $\sigma^2$ in expectation, which then gets multiplied with $\sum_{j=1}^{N-1} \nu_{j+1,j} = \Theta(N^2)$ and the resulting error term is no longer controllable as in \cref{theorem:stochastic-Dual-OHM-cocoercive}.
This is a central difference that makes the dual-anchor mechanism more amenable to stochastic extension than anchoring.
A concurrent work \cite{yoonTheoryCompositionDuality} has also provided a related theory that the sum of weights associated with inequalities used in the convergence proof measures a fixed-point algorithm's robustness to oracle noise, albeit in deterministic setting.

A complementary interpretation is that OHM is an anytime optimal algorithm, achieving the rate $\sqnorm{\opF(x_{k-1})} \le \frac{4 \sqnorm{x_0 - x_\star}}{\alpha^2 k^2}$ for all $k=1,2,\dots$. 
This means that OHM must maintain the optimal guarantee at every iteration, forcing it to exploit the geometric property of $\opF$, i.e.\ cocoercivity, throughout the trajectory. 
Its analysis is therefore more tightly tuned to the exact problem class and is more easily disrupted by oracle noise. 
\ref{eqn:Dual-OHM}, by contrast, is optimized only for the prescribed last iterate and does not suffer from the same accumulation of errors.

\subsection{Faster convergence under strong monotonicity}
\label{section:contractive-dual-ohm}

In this section, we show that if $\opF$ is additionally strongly monotone, then \ref{eqn:stochastic-Dual-OHM} can achieve the desired accuracy $\expec{\norm{\opF (x_{k})}} \le \epsilon$ much earlier than the prescribed horizon $N$.
Suppose that $\opF$ is $\mu$-strongly monotone and $\nicefrac{1}{L}$-cocoercive.
Then, with $0<\alpha<\frac{2}{L}$, $\opT = \opI - \alpha\opF$ is $\gamma$-\textit{contractive} (i.e., $\gamma$-Lipschitz) with $\gamma = \sqrt{1 - \alpha\mu (2-\alpha L)}$ because
\begin{align*}
    \sqnorm{\opT(x) - \opT(y)} & = \sqnorm{x-y} - 2\alpha \inprod{x - y}{\opF(x) - \opF(y)} + \alpha^2 \sqnorm{\opF(x) - \opF(y)} \\
    & \le \sqnorm{x-y} - \alpha(2-\alpha L) \inprod{x-y}{\opF(x) - \opF(y)} \le \left(1 - \alpha\mu (2-\alpha L) \right) \sqnorm{x-y} .
\end{align*}

\begin{theorem}
\label{theorem:stochastic-dual-ohm-strongly-monotone}
Consider the root-finding problem~\eqref{eqn:root-finding-problem} with $\nicefrac{1}{L}$-cocoercive and $\mu$-strongly monotone $\opF$ and a solution $x_\star$, satisfying Assumption~\ref{assumption:stochastic-operator-oracle}.
Let $0 < \alpha < \frac{2}{L}$ and $\gamma = \sqrt{1-\alpha\mu(2-\alpha L)} \in [0,1)$.
Let $y_k$ be the iterates from deterministic \ref{eqn:Dual-OHM} using true evaluations of $\opT$, with $x_0 = y_0$.
Then, for \(k=0,\dots,N-1\), \ref{eqn:stochastic-Dual-OHM} with $\opT_{\cB_k} = \opI - \alpha\opF_{\cB_k}$ satisfies
\begin{equation}
\label{eqn:tail-difference-bound}
    \expec{\sqnorm{x_k - y_{N-1}}} \le 2\left(\frac{1+\gamma}{1-\gamma}\right)^2 \gamma^{2k} \sqnorm{x_0 - x_\star} + \frac{2\alpha^2 \sigma^2}{B(1-\gamma^2)} .
\end{equation}
In particular, when $\alpha = \frac{1}{L}$, we have
\begin{equation}
\label{eqn:stochastic-dual-ohm-strongly-monotone-rate}
\begin{aligned}
    \expec{\norm{\opF(x_k)}^2} & \le \frac{8L^2 \sqnorm{x_0 - x_\star}}{N^2} + \frac{64L^4}{\mu^2} e^{-k\frac{\mu}{L}} \sqnorm{x_0 - x_\star} + \frac{L}{\mu} \frac{4\sigma^2}{B}.
\end{aligned}
\end{equation}
\end{theorem}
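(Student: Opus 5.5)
The plan is to prove the tail bound \eqref{eqn:tail-difference-bound} first and then deduce \eqref{eqn:stochastic-dual-ohm-strongly-monotone-rate} from it together with \cref{proposition:deterministic-acceleration}.

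\textbf{Deducing the rate from the tail bound.} Assume \eqref{eqn:tail-difference-bound} holds. Since $\opF$ is $L$-Lipschitz,
\[
\norm{\opF(x_k)}^2 \le 2\norm{\opF(y_{N-1})}^2 + 2L^2\norm{x_k - y_{N-1}}^2 .
\]
\cref{proposition:deterministic-acceleration} applied to the deterministic \ref{eqn:Dual-OHM} iterates with $\alpha = \frac1L$ gives $\norm{\opF(y_{N-1})}^2 \le \frac{4L^2\norm{x_0-x_\star}^2}{N^2}$, which produces the $\frac{8L^2\norm{x_0-x_\star}^2}{N^2}$ term. For $\alpha=\frac1L$ we have $\gamma^2 = 1-\frac{\mu}{L}$, so $\gamma^{2k}\le e^{-k\mu/L}$. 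Moreover $1-\gamma \ge \frac{1-\gamma^2}{2} = \frac{\mu}{2L}$ and $\frac{1+\gamma}{1-\gamma}\le \frac{2}{1-\gamma}\le \frac{4L}{\mu}$. Plugging these in gives $2L^2\cdot 2\cdot\frac{16L^2}{\mu^2} = \frac{64L^4}{\mu^2}$ for the transient term and $2L^2\cdot\frac{2\sigma^2/L^2}{B\,\mu/L} = \frac{4L\sigma^2}{\mu B}$ for the noise term.

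\textbf{Proving the tail bound.} I would work with the increments $\Delta_k = x_{k+1}-x_k$ and the sampled outputs $w_k = \opT_{\cB_k}(x_k)$, with $w_{-1}=x_0$. Write $w_k = \opT x_k - \alpha e_k$ with $e_k = \opF_{\cB_k}(x_k)-\opF(x_k)$. The update then reads
\[
\Delta_k = c_k\bigl(\opT x_k - \opT x_{k-1} - \alpha(e_k - e_{k-1})\bigr), \qquad c_k = \tfrac{N-k-1}{N-k}\in[0,1).
\]
Because $\opT$ is $\gamma$-contractive, the deterministic part of $\Delta_k$ is bounded by $\gamma\norm{\Delta_{k-1}}$. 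Since each $e_k$ appears in two consecutive increments with opposite signs, I would not bound the increments individually. Instead I would use summation by parts:
\[
x_m - x_k = \sum_{j=k}^{m-1} c_j (w_j - w_{j-1}) = c_{m-1}w_{m-1} - c_k w_{k-1} + \sum_{j=k}^{m-2}(c_j - c_{j+1})\,w_j .
\]
The weights $c_j - c_{j+1} = \frac{1}{(N-j-1)(N-j)}$ are nonnegative, so the future iterates are affine combinations of the $w_j$. Each noise term $e_j$ therefore enters $x_m - x_k$ only once, with a coefficient of size at most $\alpha$.

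Next I would couple $x_k$ with the deterministic trajectory $y_k$ started from $y_0 = x_0$, and also use $x_\star$: it is a fixed point of every Dual-OHM-type recursion driven by $\opT$, and $\norm{y_k - x_\star}$ and $\norm{y_{k+1}-y_k}$ decay like $\gamma^k$ up to the factor $\frac{1+\gamma}{1-\gamma}$, which comes from summing a geometric tail of increments. The target is to split
\[
x_k - y_{N-1} = (x_k - \tilde y_k) + (\tilde y_k - y_{N-1}),
\]
where $\tilde y_k$ is a deterministic quantity with $\norm{\tilde y_k - y_{N-1}} \le \frac{1+\gamma}{1-\gamma}\gamma^k\norm{x_0-x_\star}$. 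The remainder $x_k-\tilde y_k$ should satisfy a recursion of the form $r_{k+1} = (\text{$\gamma$-contraction of } r_k) + \alpha\,\eta_k$, where the $\eta_k$ are conditionally mean-zero with $\expec{\norm{\eta_k}^2}\le \sigma^2/B$. Taking expectations and using orthogonality of the martingale differences gives $\expec{\norm{r_k}^2}\le \gamma^{2k}(\cdot) + \frac{\alpha^2\sigma^2}{B}\sum_{i}\gamma^{2i}$. Combining the two parts via $(a+b)^2\le 2a^2+2b^2$ yields exactly the constants $2\bigl(\frac{1+\gamma}{1-\gamma}\bigr)^2$ and $\frac{2\alpha^2\sigma^2}{B(1-\gamma^2)}$.

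\textbf{Main obstacle.} The hard step is choosing the right remainder variable: something like $r_k = x_k - \alpha e_{k-1}$-shifted, or the quantity $u_k = x_k - w_{k-1}$, for which the recursion is a genuine $\gamma$-contraction plus a single fresh noise term. With the wrong choice the noise $e_{k-1}$ reappears and correlates with the next step. My first attempt would be to take $u_k$ and check that $u_{k+1} = \frac{N-k-1}{N-k}u_k + \frac{1}{N-k}(x_k - w_k)$-type identities make the $e$-dependence appear once per step. The conditional independence of $\cB_k$ given $x_k$ then kills the cross terms. If this fails, the fallback is a cruder $\gamma$-contraction on $\expec{\norm{\Delta_k}^2}$, which picks up noise contributions from two adjacent steps. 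That fallback would at worst inflate the constant in the noise term, but the structure of the bound would be unchanged.
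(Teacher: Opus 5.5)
Your deduction of \eqref{eqn:stochastic-dual-ohm-strongly-monotone-rate} from \eqref{eqn:tail-difference-bound} is correct and matches the paper's, constants included. The gap is in the tail bound itself, whose central step you only conjecture.

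You posit a remainder $r_k$ with $r_{k+1} = (\gamma\text{-contraction of } r_k) + \alpha\eta_k$ and fresh noise $\eta_k$, but you never exhibit one. For the natural choice $r_k = x_k - y_k$ no such recursion holds. One has $r_{k+1} = r_k + c_k\bigl[(\opT x_k - \opT y_k) - (\opT x_{k-1} - \opT y_{k-1})\bigr] - c_k\alpha(e_k - e_{k-1})$, and this fails for three reasons. It is second order. Its noise contains $e_{k-1}$, which is correlated with $r_k$. And $\gamma$-contractivity only bounds the bracket by $\gamma(\norm{r_k} + \norm{r_{k-1}})$, which gives an expanding recursion, not a contracting one.

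Your candidate $u_k = x_k - w_{k-1}$ does not help: it only recasts the scheme as a coupled system whose $u$-update is driven by $x_k - w_k = \alpha\opF_{\cB_k}(x_k)$, which is not contractive. The fallback on $\expec{\sqnorm{\Delta_k}}$ controls increments of a single trajectory and says nothing about $x_k - y_k$.

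Separately, $\norm{y_k - x_\star}$ does not decay like $\gamma^k$. Only the increments do, and $y_k \to y_{N-1} \neq x_\star$. For example, with $\opT \equiv x_\star$ (i.e.\ $\gamma = 0$), one has $y_k - x_\star = \frac1N(x_0 - x_\star)$ for all $k \ge 1$.

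The missing idea is to use your summation-by-parts formula globally instead of looking for a one-step recursion.
\begin{itemize}
\item Starting it at index $0$ gives $x_k = \frac1N x_0 + \sum_{t=0}^{k-1} w_{k,t}\opT_{\cB_t}(x_t)$ with $w_{k,t} \ge 0$ and $\sum_t w_{k,t} = 1 - \frac1N$. The same holds for $y_k$ with exact $\opT$.
\item Since $x_0 = y_0$, the anchors cancel: $x_k - y_k = \sum_{t<k} w_{k,t}\bigl[\opT x_t - \opT y_t - \alpha e_t\bigr]$.
\item Jensen's inequality (weights summing to less than one) eliminates all cross-time interactions. Only the same-time cross term $\expec{\inprod{\opT x_t - \opT y_t}{e_t}} = 0$ is needed. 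This gives $s_k := \expec{\sqnorm{x_k - y_k}} \le \gamma^2 \max_{t<k} s_t + \frac{\alpha^2\sigma^2}{B}$.
\item Induction on the running maximum then yields $s_k \le \frac{\alpha^2\sigma^2}{B(1-\gamma^2)}$.
\end{itemize}
Then take $\tilde y_k = y_k$. You have $\norm{y_{k+1} - y_k} \le \gamma\norm{y_k - y_{k-1}}$ for $k \ge 1$ and $\norm{y_1 - y_0} \le (1+\gamma)\norm{x_0 - x_\star}$. A geometric tail sum gives $\norm{y_k - y_{N-1}} \le \frac{1+\gamma}{1-\gamma}\gamma^k\norm{x_0 - x_\star}$, and $(a+b)^2 \le 2a^2 + 2b^2$ completes \eqref{eqn:tail-difference-bound}.
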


Notably, \cref{theorem:stochastic-dual-ohm-strongly-monotone} uses only \cref{assumption:stochastic-operator-oracle} and cocoercivity of $\opF$, but not \cref{assumption:cocoercivity-in-expectation}.

The reason why this result holds is two-fold. 
First, when $\opT$ is contractive, \ref{eqn:stochastic-Dual-OHM} does not deviate far from deterministic \ref{eqn:Dual-OHM} (\cref{lemma:stochastic-trajectory-deviation-bound}).
Second, deterministic \ref{eqn:Dual-OHM} achieves the designated terminal accuracy quickly as it linearly converges to its final iterate $y_{N-1}$ (\cref{lemma:deterministic-tail-difference-bound}).
Because \ref{eqn:Dual-OHM} has a small residual norm guarantee at $y_{N-1}$, this implies that $x_k$ also attains the comparably small residual quickly.

\begin{lemma}
\label{lemma:stochastic-trajectory-deviation-bound}
Under the conditions of \cref{theorem:stochastic-dual-ohm-strongly-monotone}, 
$\expec{\sqnorm{x_k - y_k}} \le \frac{\alpha^2 \sigma^2}{B(1 - \gamma^2)}$ for $k=0,\dots,N-1$.
\end{lemma}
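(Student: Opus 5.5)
The plan is to compare the two trajectories through the difference $d_k = x_k - y_k$, with $d_0 = 0$, and to prove by induction on $k$ that $\expec{\sqnorm{d_k}} \le S := \frac{\alpha^2\sigma^2}{B(1-\gamma^2)}$. Set $c_k = \frac{N-k-1}{N-k}$ and $w_k = \opT_{\cB_{k-1}}(x_{k-1}) - \opT(y_{k-1})$, with $w_0 = 0$. Subtracting the deterministic update of \ref{eqn:Dual-OHM} from \ref{eqn:stochastic-Dual-OHM} gives $d_{k+1} - d_k = c_k (w_{k+1} - w_k)$, which rearranges to
\[
d_{k+1} = c_k w_{k+1} + \left( d_k - c_k w_k \right).
\]
The new quantity $w_{k+1}$ splits as $w_{k+1} = \left(\opT x_k - \opT y_k\right) - \alpha e_k$ with $e_k = \opF_{\cB_k}(x_k) - \opF(x_k)$. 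The first piece satisfies $\norm{\opT x_k - \opT y_k} \le \gamma \norm{d_k}$ by the $\gamma$-contractivity shown just before \cref{theorem:stochastic-dual-ohm-strongly-monotone}. The second piece has conditional mean zero given the history up to $x_k$ (so given $\cB_0,\dots,\cB_{k-1}$), and conditional second moment at most $\alpha^2\sigma^2/B$ by \cref{assumption:stochastic-operator-oracle} and mini-batching.

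The intended mechanism is the usual one for perturbed contractions. Write $d_{k+1} = A_k - c_k\alpha e_k$, where $A_k$ is measurable with respect to the history before $\cB_k$ is drawn. The cross term then vanishes in expectation, so
\[
\expec{\sqnorm{d_{k+1}}} = \expec{\sqnorm{A_k}} + c_k^2\alpha^2\expec{\sqnorm{e_k}} \le \expec{\sqnorm{A_k}} + \frac{\alpha^2\sigma^2}{B}.
\]
If I can show $\expec{\sqnorm{A_k}} \le \gamma^2 \max_{i\le k}\expec{\sqnorm{d_i}}$, or something of that form, the induction closes: $\gamma^2 S + \alpha^2\sigma^2/B = S$.

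The main obstacle is that $A_k = c_k(\opT x_k - \opT y_k) + d_k - c_k w_k$ is not simply a contraction applied to $d_k$. Unrolling the recursion expresses $d_{k+1}$ as an \emph{affine}, not convex, combination of $w_1,\dots,w_{k+1}$, with coefficients $c_k$ and $c_{i-1} - c_i < 0$. So a naive triangle inequality loses a factor up to $2$, and the older noise terms $e_i$ reappear inside $A_k$.

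To handle this, I would first look for a different representation of Dual-OHM in which each iterate is a genuine convex combination of $\opT$-images. For instance, I would track $x_k - \opT_{\cB_{k-1}}(x_{k-1})$ scaled by $(N-k)$, or use the telescoped form $(N-k)(d_{k+1}-w_{k+1}) + w_{k+1} = (N-k)(d_k-w_k)+w_k$. I would then run the induction on a joint potential, such as the pair $(d_k, w_k)$ or $\max\{\expec{\sqnorm{d_k}}, \expec{\sqnorm{w_k}}/\gamma^2\}$. The goal is that Jensen's inequality applied to the convex weights gives the factor $\gamma^2$ on the contracted parts, while the fresh noise $\alpha e_k$ enters orthogonally, in expectation, to everything measurable before $\cB_k$.

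If no such convex representation exists, my fallback is to expand $d_{k+1}$ fully as a deterministic-part plus a weighted sum of the $e_i$. I would use the martingale-difference orthogonality $\expec{\inprod{e_i}{e_j}}=0$ for $i\ne j$ to sum the variances. The affine coefficients would then have to be bounded using $\sum_i (c_i - c_{i-1}) = c_k - c_0 \le 1$ together with the geometric factors $\gamma^{k-i}$ produced by contractivity.
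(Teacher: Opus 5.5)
Your unrolling is right, but the sign you give its coefficients is wrong, and the rest of the proposal is built around that error. Since $c_k=\frac{N-k-1}{N-k}$ is \emph{decreasing} in $k$, the weights $c_{i-1}-c_i=\frac{1}{(N-i)(N-i+1)}$ are positive. So with $d_0=w_0=0$ you already have
\[
x_k-y_k=\sum_{t=0}^{k-2}\frac{1}{(N-t-1)(N-t)}\bigl(\opT_{\cB_t}(x_t)-\opT(y_t)\bigr)+\frac{N-k}{N-k+1}\bigl(\opT_{\cB_{k-1}}(x_{k-1})-\opT(y_{k-1})\bigr),
\]
a nonnegative combination with total weight $c_0=1-\frac1N<1$. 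This is exactly the convex representation the paper proves (\cref{lemma:convex-representation}) and applies to both trajectories. Your ``main obstacle'' (affine weights, a factor-$2$ loss) therefore does not exist. Meanwhile the step that actually carries the proof is left in your proposal as something you would ``look for''. Your fallback would not replace it. $\opT(x_t)-\opT(y_t)$ is a nonlinear function of $e_0,\dots,e_{t-1}$ and is correlated with them. So there is no ``deterministic part plus linear noise'' expansion on which $\expec{\inprod{e_i}{e_j}}=0$ alone separates the variances.

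The target $\expec{\sqnorm{A_k}}\le\gamma^2\max_{i\le k}\expec{\sqnorm{d_i}}$ in your mechanism is also false in general, because $A_k$ still contains $\alpha e_0,\dots,\alpha e_{k-1}$. Isolating only the fresh noise is the wrong move. The paper instead applies Jensen to the whole displayed sum (call its weights $\omega_{k,t}$; they sum to less than $1$). This gives $\sqnorm{x_k-y_k}\le\sum_t\omega_{k,t}\sqnorm{\opT(x_t)-\opT(y_t)-\alpha e_t}$. It then conditions each term on $\cF_t$ separately. Since $\opT(x_t)-\opT(y_t)$ is $\cF_t$-measurable and $\condexp{e_t}{\cF_t}=0$, each term has expectation at most $\gamma^2 s_t+\frac{\alpha^2\sigma^2}{B}$, where $s_t=\expec{\sqnorm{x_t-y_t}}$. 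Each noise term is paired only with its own contracted term, so no cross-time orthogonality is needed. The result is $s_k\le\gamma^2\max_{t<k}s_t+\frac{\alpha^2\sigma^2}{B}$, which closes the induction on the running maximum with $S=\gamma^2S+\frac{\alpha^2\sigma^2}{B}$. Within your own split, the same reasoning gives $\expec{\sqnorm{A_k}}\le\gamma^2\max_{i\le k}s_i+(c_0-c_k)\frac{\alpha^2\sigma^2}{B}$, and $c_k^2\le c_k$ still closes the induction.
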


\begin{lemma}
\label{lemma:deterministic-tail-difference-bound}
Under the conditions of \cref{lemma:stochastic-trajectory-deviation-bound}, we have $\norm{y_k-y_{N-1}} \le \left(\frac{1+\gamma}{1-\gamma}\right) \gamma^{k} \norm{x_0 - x_\star}$ for $k=0,\dots,N-1$ and $x_\star \in \Fix \opT$.
\end{lemma}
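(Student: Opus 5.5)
The plan is to telescope the successive differences of deterministic \ref{eqn:Dual-OHM} and show that they contract geometrically at rate $\gamma$. Set $d_k = y_{k+1} - y_k$ for $k = 0,\dots,N-2$. By the update rule, $d_k = \frac{N-k-1}{N-k}\left(\opT y_k - \opT y_{k-1}\right)$, and the coefficient $\frac{N-k-1}{N-k}$ lies in $[0,1)$.

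\textbf{Step 1 (contraction of differences).} For $k \ge 1$, the $\gamma$-contractivity of $\opT$ established just before \cref{theorem:stochastic-dual-ohm-strongly-monotone} gives
\[
\norm{d_k} \le \norm{\opT y_k - \opT y_{k-1}} \le \gamma \norm{y_k - y_{k-1}} = \gamma \norm{d_{k-1}} .
\]
Iterating yields $\norm{d_k} \le \gamma^k \norm{d_0}$.

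\textbf{Step 2 (base case).} By the convention $\opT y_{-1} = y_0$, we have $d_0 = \frac{N-1}{N}(\opT y_0 - y_0)$. Since $x_\star \in \Fix\opT$ and $y_0 = x_0$,
\[
\norm{\opT y_0 - y_0} \le \norm{\opT y_0 - \opT x_\star} + \norm{x_\star - y_0} \le (1+\gamma)\norm{x_0 - x_\star} .
\]
Hence $\norm{d_k} \le (1+\gamma)\gamma^k \norm{x_0 - x_\star}$ for all $k = 0,\dots,N-2$.

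\textbf{Step 3 (telescoping).} Write $y_{N-1} - y_k = \sum_{i=k}^{N-2} d_i$. The triangle inequality then gives
\[
\norm{y_k - y_{N-1}} \le (1+\gamma)\norm{x_0-x_\star}\sum_{i=k}^{N-2}\gamma^i \le \frac{1+\gamma}{1-\gamma}\,\gamma^k\norm{x_0-x_\star} .
\]
For $k = N-1$ the left-hand side is zero, so that case is trivial.

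There is no real obstacle here. The only points needing care are the base case, where the convention $\opT y_{-1} = y_0$ must be used to bound $d_0$, and the observation that the step coefficients are at most $1$, so that they never amplify the contraction.
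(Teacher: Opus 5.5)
Your proposal is correct and follows the paper's proof essentially step for step. The paper uses the same three steps: the contraction $\norm{y_{k+1}-y_k}\le\gamma\norm{y_k-y_{k-1}}$ for $k\ge 1$, the base bound $\norm{y_1-y_0}\le(1+\gamma)\norm{x_0-x_\star}$ obtained from the convention $\opT y_{-1}=y_0$, and a geometric-series telescoping (your separate remark that $k=N-1$ is trivial is a harmless addition).
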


\begin{proof}[Proof of \cref{theorem:stochastic-dual-ohm-strongly-monotone}]
Note that $\expec{\norm{x_k - y_{N-1}}^2} \le 2\expec{\norm{x_k - y_k}^2} + 2\expec{\sqnorm{y_k - y_{N-1}}}$.
Then, by Lemmas~\ref{lemma:stochastic-trajectory-deviation-bound} and \ref{lemma:deterministic-tail-difference-bound}, $\expec{\norm{x_k - y_{N-1}}^2} \le \frac{2\alpha^2 \sigma^2}{B(1-\gamma^2)} + 2\left(\frac{1+\gamma}{1-\gamma}\right)^2 \gamma^{2k} \sqnorm{x_0 - x_\star}$, which proves \eqref{eqn:tail-difference-bound}.
With $\alpha = \frac{1}{L}$, we have $1-\gamma^2 = \frac{\mu}{L}$, so combining \eqref{eqn:tail-difference-bound} with \cref{proposition:deterministic-acceleration} we obtain
\begin{align*}
    \expec{\norm{\opF(x_k)}^2} & \le 2\expec{\sqnorm{\opF(y_{N-1})}} + 2\expec{\sqnorm{\opF(x_k) - \opF(y_{N-1})}} \\
    & \le 2\expec{\sqnorm{\opF(y_{N-1})}} + 2L^2 \expec{\sqnorm{x_k - y_{N-1}}} \\
    & \le \frac{8L^2 \sqnorm{x_0 - x_\star}}{N^2} + 2L^2 \left[ \frac{2\sigma^2}{\mu L B} + 2\left(\frac{1+\gamma}{1-\gamma}\right)^2 \gamma^{2k} \sqnorm{x_0 - x_\star} \right] .
\end{align*}
where the second inequality uses $L$-Lipschitzness of $\opF$.
Rearranging and using $\gamma < 1$, $(1-\gamma)^2 \ge \frac{(1-\gamma)^2 (1+\gamma)^2}{4} = \frac{\mu^2}{4L^2}$ and $\gamma^{2k} = \left(1 - \frac{\mu}{L}\right)^k \le e^{-k\frac{\mu}{L}}$, we obtain \eqref{eqn:stochastic-dual-ohm-strongly-monotone-rate}.
\end{proof}

Let $\norm{x_0 - x_\star} = D$. The bound \eqref{eqn:stochastic-dual-ohm-strongly-monotone-rate} shows that if one chooses, e.g., $N \ge \frac{4LD}{\epsilon}, k \ge \frac{L}{\mu} \log \frac{256 L^4 D^2}{\mu^2\epsilon^2}$ and $B \ge \frac{16L\sigma^2}{\mu \epsilon^2}$, then $\expec{\norm{\opF(x_k)}}^2 \le \expec{\norm{\opF(x_k)}^2} \le \epsilon^2$.
If we run \ref{eqn:stochastic-Dual-OHM} with these choices of $N$ and $B$, we will achieve the accuracy $\epsilon$ in $\cO\left(\frac{L}{\mu} \log \epsilon^{-1}\right)$ iterations without the need to run all $N$ iterations.
With this early stopping, \ref{eqn:stochastic-Dual-OHM} has improved $\cO\left(\frac{L^2\sigma^2}{\mu^2 \epsilon^2} \log \frac{1}{\epsilon} \right)$ oracle complexity for strongly monotone $\opF$, 
which has near-optimal dependence on $\epsilon$ \cite{chenNearoptimalAlgorithmsMaking2024}.
The dependence on condition number, however, is still suboptimal, and improving it based on the \ref{eqn:Dual-OHM} mechanism remains an open question.

\section{Numerical Experiments}
\label{section:experiments}

We provide numerical evaluations to demonstrate that the dual-anchor mechanism is indeed preferable to anchoring (Halpern algorithms) under stochasticity, and compare the empirical performance of \ref{eqn:stochastic-Dual-OHM} with prior algorithms for stochastic monotone inclusions/fixed-point problems.
We consider three distinct problems: the worst-case nonexpansive fixed-point operator designed by \citep{ParkRyu2022_exact}, a finite-sum cocoercive operator using uniform random vectors from the unit sphere, and a strongly-convex--strongly-concave minimax problem with Huber-type regularizers following \citep{yoonAcceleratedAlgorithmsSmooth2021, chenNearoptimalAlgorithmsMaking2024}.
All experiments were run on a MacBook Air with Apple M3 Chip and 24GB Memory.
 
\paragraph{Baseline algorithms.}
We consider the simple algorithm $x_{k+1} = x_k - \alpha \opF_{\cB_k}(x_k)$, which we call Stochastic Gradient Descent-Ascent (\algname{SGDA}) following the nomenclature of the minimax optimization literature.
We consider the Stochastic Extragradient (\algname{SEG}) with the update rule
\[
    x_{k+1/2} = x_k - \alpha \opF_{\cB_k}(x_k) , \quad x_{k+1} = x_k - \alpha \opF_{\cB_{k+1/2}} (x_{k+1/2}) .
\]
Next, we consider the naive stochastic extension of \ref{eqn:OHM}: $x_{k+1} = \frac{1}{k+2}x_0 + \frac{k+1}{k+2} \left( x_k-\alpha \opF_{\cB_k}(x_k)\right)$, using the same constant mini-batch size as \ref{eqn:stochastic-Dual-OHM}. We refer to this algorithm as \algname{S-OHM} (with constant $B$). This is included to isolate the effect of using the dual-anchor update rather than the usual anchor acceleration and directly contrast the two mechanisms.

We also include the algorithms carefully designed for reducing the expected last-iterate residual.
First, the \algname{Halpern-PAGE} algorithm from \citep{cai2022stochastic} uses the \algname{S-OHM} update but replaces $\opF_{\cB_k}(x_k)$ by a PAGE-type estimator which uses a larger batch size $s_1$ with probability $p_k = \min\left\{1, \frac{2}{k+2}\right\}$ and otherwise uses a sample operator difference with batch size $s_2$.
Second, we use the single-loop simplification of \algname{RAIN} algorithm from \citep{chenNearoptimalAlgorithmsMaking2024} used in their experiments, which can be written as 
\[
    z_{k+1/2} = z_k - \eta\bigl(\widehat{\opF}(z_k; \xi_k) + r_k(z_k)\bigr),
    \qquad
    z_{k+1} = z_k - \eta\bigl(\widehat{\opF}(z_{k+1/2}; \xi_{k+1/2}) + r_k(z_{k+1/2})\bigr)
\]
where $r_k(x) = \lambda\gamma (S_k x - m_k) , S_k = \sum_{t<k}(1+\gamma)^t, m_k = \sum_{t<k}(1+\gamma)^t z_t$, and $\eta, \lambda, \gamma > 0$ are hyperparameters.
Finally, \algname{Halpern-VR-Finite} from \citep{caiVarianceReducedHalpern2024} is similar to \algname{Halpern-PAGE} but applicable only to finite sum problems of the form $\opF = \frac{1}{n} \sum_{i=1}^n \opF(\cdot; \xi_i)$ and uses either full-batch (true) operator $\opF$ or batches of size $\lceil\sqrt{n}\rceil$.

We tune all baseline algorithms and \ref{eqn:stochastic-Dual-OHM} under the same total sample budget $Q=NB$ via grid search, where computing $\opF_{\cB_k}$ with $|\cB_k| = B$ counts as $B$ operator evaluations.
The only hyperparameter $B$ for \ref{eqn:stochastic-Dual-OHM} is tuned over $B\in\{1,10,20,50,100\}$, and \algname{S-OHM} (Constant $B$) is then run with the same selected $B$.\footnote{While this emphasizes the distinction between the anchor and the dual-anchor updates, for fully controlled comparison, we additionally provide experiments where $B$ for \algname{S-OHM} is tuned separately in Appendix~\ref{section:additional-experiments}.}
The \algname{Halpern-PAGE} parameters are tuned over $s_1\in\{1,10,20,50,100\}$ and $s_2\in\{1,5,10,20\}$, and \algname{RAIN} hyperparameters are tuned over $\eta\in\{0.005,0.01,0.05,0.1,1\}$, $\lambda\in\{0.001,0.01,0.1,1\}$, and $\gamma \in \{0.001,0.01,0.1,1\}$.
\algname{SGDA} and \algname{SEG} step-sizes were tuned over $\alpha \in \{0.005,0.01,0.05,0.1,1\}$, the same step-size grid as \algname{RAIN}.
We run each experiment with 10 independent random seeds and report the average residual norm $\norm{\opF(x_k)}$ together with the shaded region indicating the empirical 5th--95th percentile band.

\begin{figure}[t]
    \centering
    \includegraphics[width=0.47\linewidth]{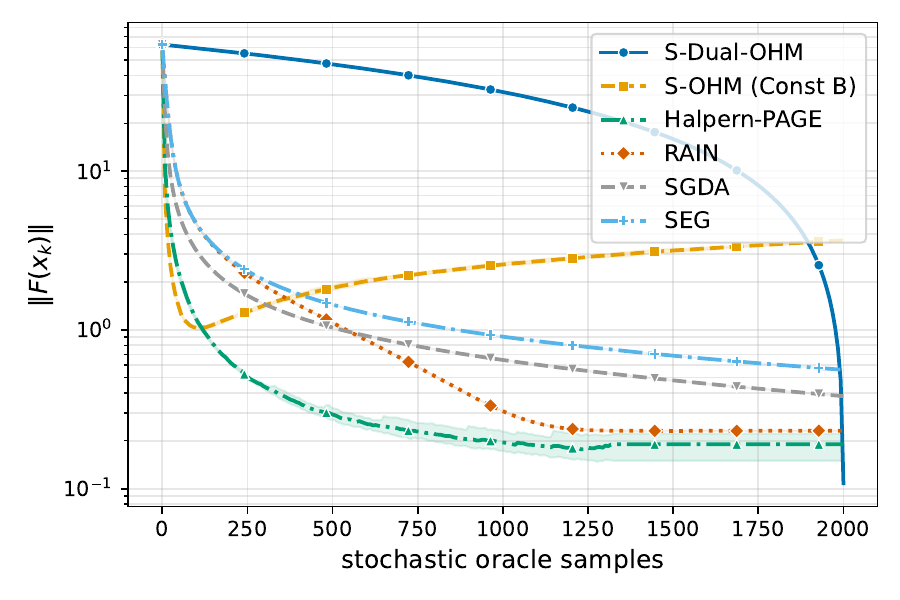}
    \hfill
    \includegraphics[width=0.47\linewidth]{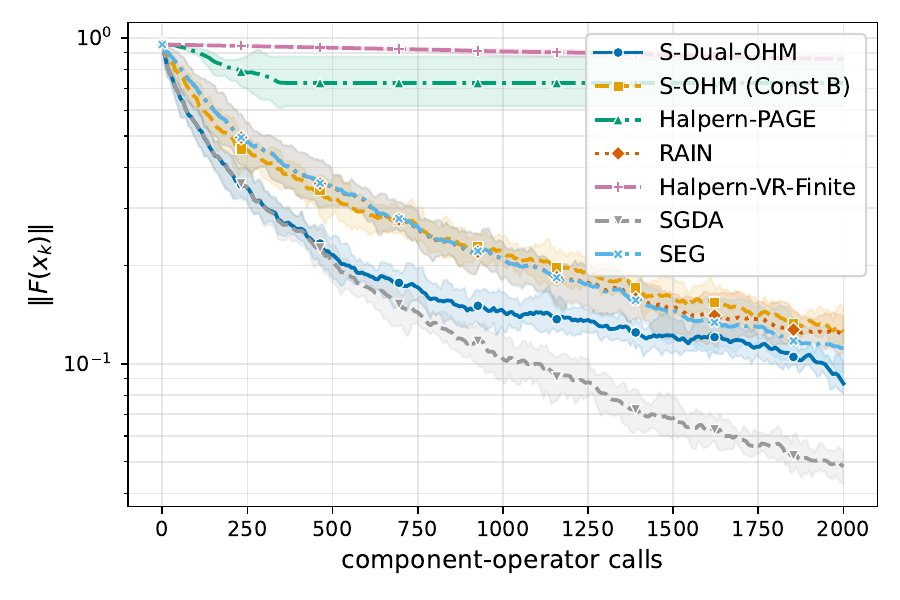}
    \caption{Plot of residual norm versus stochastic samples. 
    \textbf{(Left)} Worst-case nonexpansive operator construction (Experiment 1). 
    \textbf{(Right)} Finite-sum random cocoercive operator construction (Experiment 2).
    Solid curves indicate means over 10 independent runs, and shaded bands denote empirical 5th--95th
    percentiles.}
    \label{figure:numerical-experiments-1}
\end{figure}

\paragraph{Experiment 1: Worst-case nonexpansive operator.}
We use the affine lower-bound construction of \citep{ParkRyu2022_exact}: define $\opH\colon \reals^d \to \reals^d$ by $\opH(x_1,\dots,x_d) = \left( x_d - \frac{2}{\sqrt{d}} , -x_1, -x_2, \dots, -x_{d-1} \right)$, and after choosing a shift vector $s\in \reals^d$, let $\opF(x) = \opH(x-s) + (x-s)$ and $\opT = \opI - \opF$.
Then $\opF$ is $\frac{1}{2}$-cocoercive and $\opT$ is nonexpansive.
We use Gaussian noise oracle $\widehat\opF (x;\xi) = \opF(x) + \xi$ with $\xi \sim \cN\left(0,\frac{\sigma^2}{d}I_d\right)$, and use a budget of $2000$ samples. 
We set $d=2001$, $x_0 = 0$, $\alpha=1$, $\sigma=0.1$ and $s \sim \cN(0,I_d)$.

The left panel of Figure~\ref{figure:numerical-experiments-1} illustrates that \ref{eqn:stochastic-Dual-OHM} is not an anytime algorithm, so its intermediate residuals can be worse than those of the other algorithms; however, in the end, it achieves the smallest last-iterate residual among all algorithms compared.
This is in sharp contrast with \algname{S-OHM} with constant $B$, which fails to converge and exhibits diverging residual.
This demonstrates that Halpern-type algorithms can indeed be prone to accumulation of stochastic errors.
However, with variance reduction (\algname{Halpern-PAGE}) or recursively adjusted anchor (\algname{RAIN}), the residual convergence gets stabilized.
Finally, \algname{SEG} and \algname{SGDA} converge steadily although they were not specifically designed for residual minimization.

\paragraph{Experiment 2: Finite-sum cocoercive operator.}
We consider a finite-sum problem
\[
    \opF(x)=\frac1n\sum_{i=1}^n \opF_i(x),
    \qquad
    \opF_i(x)=
    \begin{pmatrix}
        q_i(q_i^\top x_{1:r})\\
        0
    \end{pmatrix}\in\mathbb R^d,
\]
where each $q_i\in\mathbb R^r$ is sampled uniformly from the unit sphere.
Here each $\opF_i$ is $1$-cocoercive and linear, and therefore $\opF$ is cocoercive with $L\le 1$ but not strongly monotone when $r<d$ since the common null space of $\opF_i$ is nontrivial.
A stochastic oracle call samples component indices and returns the average of the queried $\opF_i$'s.
The operator variance is bounded over the region that trajectories of all algorithms stay within during all experiment runs.
We use a budget of 2000 stochastic samples, and set $n=200$, $d=200$, $r=199$ and $\opT = \opI - \alpha\opF$ with $\alpha=1$.
We take $x_0 \sim 10 \cdot \cN(0, I_d)$.
This is a generic, non-worst-case problem that basic methods like SGDA can solve effectively.
Still, \ref{eqn:stochastic-Dual-OHM} is also fairly competent, and importantly, outperforms its primal counterpart \algname{S-OHM}. 
Notably, \algname{Halpern-VR-Finite} makes very small progress because the total sample budget 2000 is restrictive with our choice $n=200$, even though it has the best asymptotic dependence on $\epsilon$ in theory. 

\begin{figure}[t]
    \centering
    \includegraphics[width=0.47\linewidth]{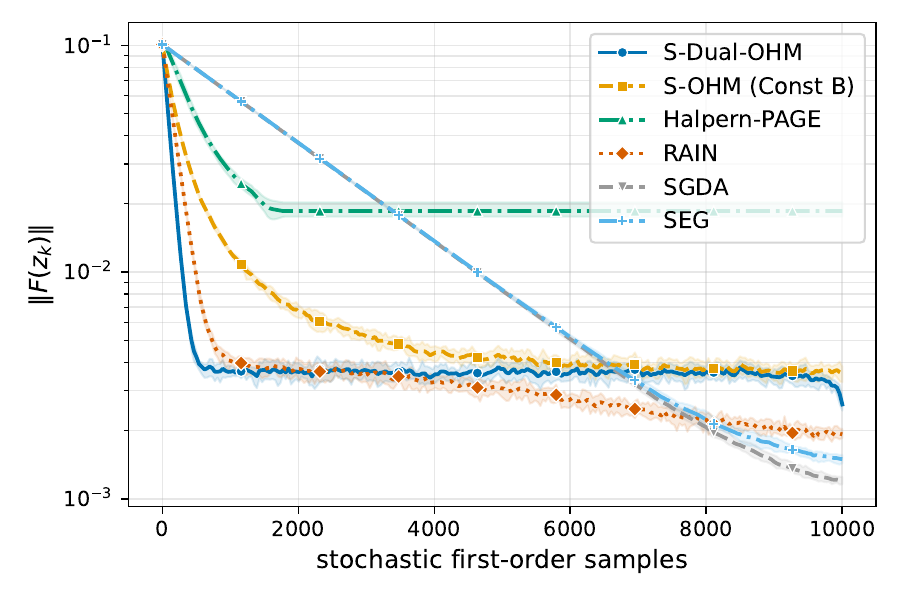}
    \hfill
    \includegraphics[width=0.47\linewidth]{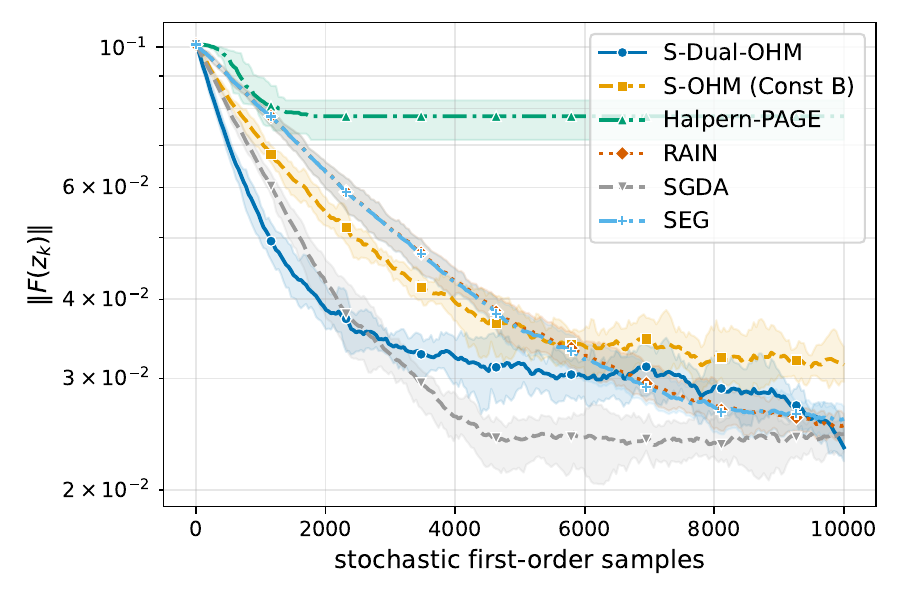}
    \caption{Plot of residual norm versus stochastic samples for SCSC Huber-type minimax problem (Experiment 3). 
    \textbf{(Left)} Low-variance regime $\sigma=0.05$. 
    \textbf{(Right)} High-variance regime $\sigma=1.5$.
    Solid curves are means over 10 independent runs, and shaded bands indicate empirical 5th--95th
    percentiles.}
    \label{figure:numerical-experiments-2}
\end{figure}

\paragraph{Experiment 3: SCSC minimax problem with Huber-type regularizers.}
We consider 
\[
    \Phi(x,y)
    =(1-\delta)\sum_i g_\nu(x_i)
     +\delta x^\top y
     -(1-\delta)\sum_i g_\nu(y_i)
     +\frac{\mu}{2}\norm{x}^2-\frac{\mu}{2}\norm{y}^2,
\]
for $z = (x,y) \in \reals^d \times \reals^d$, where $g_\nu(u)=
\begin{cases}
    \frac{1}{2} u^2, & |u|\le \nu \\
    \nu |u| - \frac{1}{2}\nu^2, & |u|>\nu 
\end{cases}$.
The saddle operator $\opF(z)=(\nabla_x\Phi(x,y),-\nabla_y\Phi(x,y))$ is then given as $\opF(z)=
\begin{pmatrix}
    (1-\delta)\operatorname{clip}(x,-\nu,\nu)+\delta y+\mu x\\
    -\delta x+(1-\delta)\operatorname{clip}(y,-\nu,\nu)+\mu y
\end{pmatrix}$,
which is $\mu$-strongly monotone and $M$-Lipschitz with $M=1+\mu$.  
We use the Gaussian noise oracle $\widehat\opF (z;\xi) = \opF(z) + \xi$ with $\xi \sim \cN\left(0,\frac{\sigma^2}{2d}I_{2d}\right)$, and use a budget of 10000 stochastic samples.
We set $d=50$, $\delta=10^{-2}$, $\nu=5 \times 10^{-5}$, $\mu=0.1$, $L=\frac{M^2}{\mu}$ and $z_0 = (x_0, y_0)$ is a uniform random unit vector.
We use $\alpha = \frac{1}{L}$ for \ref{eqn:stochastic-Dual-OHM}, \algname{S-OHM} and \algname{Halpern-PAGE}.

We simulate both low noise ($\sigma=0.05$) and high noise ($\sigma=1.5$) regimes.
When $\sigma=0.05$, we observe rapid progression in earlier iterations of \ref{eqn:stochastic-Dual-OHM}, but then it stagnates (Figure~\ref{figure:numerical-experiments-2}).
This is because the noise term is small, so the best batch size choice is $B^* = 1$, i.e., mini-batching does not benefit the final convergence.
In this regime, \algname{RAIN, SGDA, SEG} converges more steadily than \ref{eqn:stochastic-Dual-OHM}, although we still observe that \algname{S-OHM} is inferior to \ref{eqn:stochastic-Dual-OHM}.
With larger noise with $\sigma=1.5$, we observe that mini-batching becomes effective and $B^* = 10$ is selected; in this case, dual-anchor with mini-batching exhibits comparable performance to classical methods and deliberately designed near-optimal algorithm \algname{RAIN}.

\section{Conclusion and Open Questions}

We demonstrate that dual-anchor acceleration from deterministic root-finding or fixed-point problems admits a direct stochastic extension, unlike anchor-type accelerated algorithms. 
This is based on the observation that distinct deterministic accelerations are intrinsically different in their behavior under stochasticity. 
We believe that our work enables interesting future work on acceleration and optimal complexities for stochastic optimization.

Two immediate limitations of our work, which thus remain as open questions, are: \textbf{(1)} whether the dual-anchor mechanism extends to root-finding problems with monotone and Lipschitz operators $\opF$ and \textbf{(2)} whether we can further improve the complexity to the optimal level \citep{chenNearoptimalAlgorithmsMaking2024} $\cO(\epsilon^{-2} \log\epsilon^{-1})$ via dual-anchor acceleration.
As the dual-anchor acceleration also exists in the monotone Lipschitz inclusion setting \citep{yoonOptimalAccelerationMinimax2024}, we believe that the extension would be possible with novel insights not relying on nonexpansivity of $\opI - \alpha\opF$.
We also expect our result to serve as a baseline that can be combined with broader techniques in stochastic optimization for potentially achieving the optimal complexity.

\section*{Acknowledgments}
TaeHo Yoon’s contribution to this work was supported by NSF CCF 2504626.
Nicolas Loizou’s contribution to this work was supported by NSF CCF 2504626 and NSF CAREER 2542902.

\bibliographystyle{plainnat}
\bibliography{ref}

 \clearpage

\appendix

\part*{Appendix}

\medskip

\tableofcontents
\newpage

\section{Missing Proofs}
\label{section:appendix-missing-proofs}

We denote by $\cF_1 \subseteq \cF_2 \subseteq \dots$ the natural filtration generated by each iteration of the algorithm (selection of batches $\cB_i$), i.e., $\cF_k = \sigma\left( \cB_0, \cB_1, \dots, \cB_{k-1} \right)$.

\subsection{Proof of \texorpdfstring{\cref{lemma:stochastic-Dual-OHM-magic-identity}}{Lemma 4.2}}

\begin{proof}
The proof is purely algebraic, and does not use any property of $\opF$ as an operator or stochastic property of the mini-batches $\cB_j$.
The case \(N=1\) is immediate (we have the vacuous identity $0=0$), so assume \(N\ge 2\). 

For \(j=1,\dots,N\), define
\[
    G_j = \opF_{\cB_{j-1}}(x_{j-1}),
    \qquad
    g_j = \frac{\alpha}{2}G_j,
    \qquad
    u_j = x_{j-1}-g_j .
\]
Here, $\cB_{N-1}$ is a ghost (auxiliary) batch introduced only for the analysis, and is not used by the algorithm update; in fact, one may ignore this ghost batch (i.e., safely assume that $\cB_{N-1}$ is the full batch) and simply let $G_N = \opF(x_{N-1})$.
With this notation, we have
\[
    \opT_{\cB_{j-1}}(x_{j-1})
    = x_{j-1}-\alpha G_j
    = x_{j-1}-2g_j .
\]
Because \(\opT_{\cB_{-1}}(x_{-1})=x_0\), the \(k=0\) update of \ref{eqn:stochastic-Dual-OHM} is
\[
    x_1=x_0+\frac{N-1}{N}\bigl(\opT_{\cB_0}(x_0)-x_0\bigr).
\]
Let
\[
    a_k=\frac{N-k-1}{N-k},
    \qquad
    z_0=0,
    \qquad
    z_{k+1}=\opT_{\cB_k}(x_k)-x_{k+1}
\]
for \(k=0,\dots,N-2\). Then, from the update rule,
\begin{align}
    z_{k+1}
    & = \opT_{\cB_k}(x_k)
       -x_k-a_k\bigl(\opT_{\cB_k}(x_k)-\opT_{\cB_{k-1}}(x_{k-1})\bigr) \nonumber \\
    & = (1-a_k) (x_k - 2g_{k+1}) - x_k + a_k \opT_{\cB_{k-1}}(x_{k-1}) \nonumber \\ 
    & = a_k \left( \opT_{\cB_{k-1}}(x_{k-1}) - x_k \right) - 2(1-a_k) g_{k+1} \nonumber \\
    & = \frac{N-k-1}{N-k} z_k - \frac{2}{N-k} g_{k+1} . \label{eqn:zk-recursion}
\end{align}
Then, by definition of $z_{k+1}$,
\begin{align}
\label{eqn:xk-recursion-with-z}
    x_{k+1} = \opT_{\cB_k}(x_k)-z_{k+1} = x_k-2g_{k+1}-z_{k+1}.
\end{align}

Now define, for \(k=0,\dots,N-1\),
\[
    V_k = -\frac{N-k-1}{N-k}\sqnorm{z_k+2g_N}
    +\frac{2}{N-k}\inprod{z_k+2g_N}{x_k-x_{N-1}} .
\]
Note that \(V_{N-1}=0\).
Next, we analyze \(V_k-V_{k+1}\). To simplify the notations, fix \(k\in\{0,\dots,N-2\}\), and write
\[
    m=N-k,\qquad
    w_k=z_k+2g_N,\qquad
    d_k=g_{k+1}-g_N,\qquad
    p_k=x_k-x_{N-1}.
\]
From \eqref{eqn:zk-recursion}, we have
\[
    w_{k+1} - 2g_N = \frac{N-k-1}{N-k} (w_k - 2g_N) - \frac{2}{N-k} g_{k+1} \implies w_{k+1} = \frac{m-1}{m}w_k-\frac{2}{m}d_k
\]
and \eqref{eqn:xk-recursion-with-z} yields
\[
    p_{k+1} = x_{k+1} - x_{N-1} = (x_k - x_{N-1}) - (z_{k+1} + 2g_N) - 2(g_{k+1} - g_N) = p_k -w_{k+1} - 2d_k .
\]
Plugging the last identity into the expansion of $V_k - V_{k+1}$ below, we proceed as
\[
\begin{aligned}
V_k-V_{k+1}
&=
-\frac{m-1}{m}\sqnorm{w_k}
+\frac{2}{m}\inprod{w_k}{p_k}
+\frac{m-2}{m-1}\sqnorm{w_{k+1}}
-\frac{2}{m-1}\inprod{w_{k+1}}{p_{k+1}}                                      \\
&=
-\frac{m-1}{m}\sqnorm{w_k}
+\frac{m}{m-1}\sqnorm{w_{k+1}}
+\inprod{\frac{2}{m}w_k-\frac{2}{m-1}w_{k+1}}{p_k}
+\frac{4}{m-1}\inprod{w_{k+1}}{d_k}                                           \\
&=
\frac{4}{m(m-1)}\inprod{p_k}{d_k}
-\frac{4}{m(m-1)}\sqnorm{d_k}                                                   \\
&=
\frac{4}{(N-k)(N-k-1)}
\inprod{u_{k+1}-u_N}{g_{k+1}-g_N},
\end{aligned}
\]
where in the third equality we substituted
\(w_{k+1}=\frac{m-1}{m}w_k-\frac{2}{m}d_k\) and canceled out the terms, and in the last equality we used
\[
    p_k-d_k
    =
    x_k-x_{N-1}-(g_{k+1}-g_N)
    =
    u_{k+1}-u_N .
\]
Summing over \(k=0,\dots,N-2\) and using \(V_{N-1}=0\) yields
\[
    V_0
    =
    \sum_{j=1}^{N-1}
    \frac{4}{(N-j+1)(N-j)}
    \inprod{u_j-u_N}{g_j-g_N}.
\]
On the other hand, since \(z_0=0\),
\[
    V_0
    =
    -\frac{N-1}{N}\sqnorm{2g_N}
    +\frac{2}{N}\inprod{2g_N}{x_0-x_{N-1}}
    =
    -\frac{4}{N}
    \left(
        (N-1)\sqnorm{g_N}
        +\inprod{g_N}{x_{N-1}-x_0}
    \right).
\]
Therefore, equating the two expressions for $V_0$ and multiplying $\frac{N}{4}$ throughout, we obtain
\begin{align}
\label{eqn:magic-identity-with-substitution}
    0
    =
    (N-1)\sqnorm{g_N}
    +\inprod{g_N}{x_{N-1}-x_0}
    +
    \sum_{j=1}^{N-1}
    \frac{N}{(N-j)(N-j+1)}
    \inprod{u_j-u_N}{g_j-g_N}.
\end{align}
Finally, substituting \(u_j=x_{j-1}-g_j\) and \(g_j=\frac{\alpha}{2}G_j\), we have
\[
\begin{aligned}
    \inprod{u_j-u_N}{g_j-g_N}
    &=
    \inprod{x_{j-1}-x_{N-1}}{g_j-g_N}
    -\sqnorm{g_j-g_N} \\
    &=
    \frac{\alpha}{2}
    \left(
        \inprod{x_{j-1}-x_{N-1}}{G_j-G_N}
        -\frac{\alpha}{2}\sqnorm{G_j-G_N}
    \right),
\end{aligned}
\]
and
\[
    (N-1)\sqnorm{g_N}
    =
    \frac{(N-1)\alpha^2}{4}\sqnorm{G_N},
    \qquad
    \inprod{g_N}{x_{N-1}-x_0}
    =
    \frac{\alpha}{2}\inprod{G_N}{x_{N-1}-x_0}.
\]
Hence, recalling \(G_j=\opF_{\cB_{j-1}}(x_{j-1})\) and $\lambda_{N,j} = \frac{N\alpha}{2(N-j)(N-j+1)}$, \eqref{eqn:magic-identity-with-substitution} is exactly the claimed identity.
\end{proof}

\subsection{Proof of \texorpdfstring{\cref{lemma:extract-dependence-on-lower-bound}}{Lemma 4.3}}

We can rewrite \cref{lemma:stochastic-Dual-OHM-magic-identity} as
\begin{align}
\label{eqn:expectation-lower-bound}
\begin{aligned}
    0 & \ge \expec{\frac{(N-1)\alpha^2}{4} \sqnorm{\opF(x_{N-1})} + \frac{\alpha}{2} \inprod{\opF (x_{N-1})}{x_{N-1} - x_0}} \\ 
    & \quad + \sum_{j=1}^{N-1} \lambda_{N,j} \expec{\inprod{x_{j-1} - x_{N-1}}{\opF_{\cB_{j-1}}(x_{j-1}) - \opF(x_{N-1})} - \frac{\alpha}{2} \sqnorm{\opF_{\cB_{j-1}} (x_{j-1}) - \opF (x_{N-1})}} .
\end{aligned}
\end{align}
Now let $e_{j-1} = \opF_{\cB_{j-1}}(x_{j-1}) - \opF(x_{j-1})$ be the error term at iteration $j-1$. 
Then we can write
\begin{align*}
    & \expec{\inprod{x_{j-1} - x_{N-1}}{\opF_{\cB_{j-1}}(x_{j-1}) - \opF(x_{N-1})} - \frac{\alpha}{2} \sqnorm{\opF_{\cB_{j-1}} (x_{j-1}) - \opF (x_{N-1})} } \\
    & = \expec{\inprod{x_{j-1} - x_{N-1}}{\opF(x_{j-1}) - \opF(x_{N-1})} - \frac{\alpha}{2} \sqnorm{\opF (x_{j-1}) - \opF (x_{N-1})} } \\
    & \quad + \expec{\inprod{x_{j-1} - x_{N-1}}{e_{j-1}} - \alpha \inprod{\opF(x_{j-1}) - \opF(x_{N-1})}{e_{j-1}} - \frac{\alpha}{2} \sqnorm{e_{j-1}}} \\
    & \ge \expec{\inprod{- x_{N-1} + \alpha \opF(x_{N-1})}{e_{j-1}}} - \frac{\alpha \sigma^2}{2B} 
\end{align*}
where the last inequality uses cocoercivity of $\opF$, $\condexp{e_{j-1}}{\cF_{j-1}} = 0$ and $x_{j-1} - \alpha \opF(x_{j-1})$ is deterministic on $\cF_{j-1}$, and $\condexp{\sqnorm{e_{j-1}}}{\cF_{j-1}} \le \frac{\sigma^2}{B}$.
Substituting the above into \eqref{eqn:expectation-lower-bound} and using $\opT = \opI - \alpha\opF$, we obtain~\eqref{eqn:expectation-lower-bound-second}.    

\subsection{Proof of \texorpdfstring{\cref{lemma:S-Dual-OHM-cross-term-expectation-bound}}{Lemma 4.4}}

\begin{lemma}
\label{lemma:convex-representation}
For $k=1,\dots,N-1$, the \ref{eqn:stochastic-Dual-OHM} iterates can be expressed as
\begin{equation}
\label{eq:convex-representation}
x_k = \frac{1}{N}x_0 + \sum_{t=0}^{k-2} \frac{1}{(N-t-1)(N-t)} \opT_{\cB_t}(x_t) + \frac{N-k}{N-k+1}\opT_{\cB_{k-1}}(x_{k-1}).
\end{equation}
\end{lemma}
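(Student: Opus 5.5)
I will prove the representation \eqref{eq:convex-representation} by induction on $k$, working purely algebraically from the update rule of \ref{eqn:stochastic-Dual-OHM}. To lighten notation, write $T_t := \opT_{\cB_t}(x_t)$ for $t\ge 0$ and $T_{-1} := x_0$, and set $c_t := \frac{1}{(N-t-1)(N-t)}$.

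\textbf{Base case $k=1$.} The sum over $t=0,\dots,-1$ is empty, so the claim reads $x_1 = \frac1N x_0 + \frac{N-1}{N} T_0$. This is exactly the $k=0$ update $x_1 = x_0 + \frac{N-1}{N}(T_0 - x_0)$, already recorded in the proof of \cref{lemma:stochastic-Dual-OHM-magic-identity}.

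\textbf{Inductive step.} Assume the formula holds for some $k\in\{1,\dots,N-2\}$. The update gives
\[
x_{k+1} = x_k + \frac{N-k-1}{N-k}\bigl(T_k - T_{k-1}\bigr).
\]
Substituting the hypothesis for $x_k$, the coefficients of $x_0$ and of $T_t$ for $t\le k-2$ are unchanged. The new $T_k$ coefficient is $\frac{N-k-1}{N-k} = \frac{N-(k+1)}{N-(k+1)+1}$, which is the required last coefficient. The only real check is the coefficient of $T_{k-1}$:
\[
\frac{N-k}{N-k+1} - \frac{N-k-1}{N-k}
= \frac{(N-k)^2 - (N-k-1)(N-k+1)}{(N-k+1)(N-k)}
= \frac{1}{(N-k)(N-k+1)} = c_{k-1},
\]
as required. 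This completes the induction.

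As a sanity check that the representation is an affine (indeed convex) combination, the weights telescope:
\[
\frac1N + \sum_{t=0}^{k-2}\Bigl(\frac{1}{N-t-1} - \frac{1}{N-t}\Bigr) + \frac{N-k}{N-k+1}
= \frac1N + \frac{1}{N-k+1} - \frac1N + 1 - \frac{1}{N-k+1} = 1.
\]
Nothing here is a genuine obstacle; the only care needed is the indexing convention $T_{-1}=x_0$ and keeping the empty-sum case $k=1$ consistent.
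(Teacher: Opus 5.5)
Your proposal is correct and follows the paper's proof essentially verbatim: the same induction on $k$, the base case read off from the $k=0$ update, and the same identity $\frac{N-k}{N-k+1}-\frac{N-k-1}{N-k}=\frac{1}{(N-k)(N-k+1)}$ for the coefficient of $\opT_{\cB_{k-1}}(x_{k-1})$. Your telescoping check that the weights sum to $1$ (equivalently $\sum_{t=0}^{k-1} w_{k,t} = 1-\frac1N$) is also correct; the paper uses this fact later in the Jensen steps.
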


\begin{proof}
We use induction on \(k\). For \(k=1\), $x_1 = x_0+\frac{N-1}{N}\bigl(\opT_{\cB_0}(x_0)-x_0\bigr) = \frac1N x_0 + \frac{N-1}{N}\opT_{\cB_0}(x_0)$, which agrees with \eqref{eq:convex-representation}.
Now assume \eqref{eq:convex-representation} holds for some \(k\in\{1,\dots,N-2\}\).
Then, by induction hypothesis,
\begin{align*}
x_{k+1} & = x_k+\frac{N-k-1}{N-k}\Bigl(\opT_{\cB_k}(x_k)-\opT_{\cB_{k-1}}(x_{k-1})\Bigr) \\
& = \frac1N x_0 + \sum_{t=0}^{k-2} \frac{1}{(N-t-1)(N-t)} \opT_{\cB_t}(x_t) + \left( \frac{N-k}{N-k+1}-\frac{N-k-1}{N-k} \right)\opT_{\cB_{k-1}}(x_{k-1}) \\
& \quad + \frac{N-k-1}{N-k}\opT_{\cB_k}(x_k).
\end{align*}
The induction (hence the proof) is then complete, as $\frac{N-k}{N-k+1}-\frac{N-k-1}{N-k} = \frac{1}{(N-k)(N-k+1)}$.
\end{proof}

\begin{lemma}[Leave-one-out stability]
\label{lemma:loo-stability}
For \(s\in\{0,\dots,N-2\}\), let \(\big\{x_k^{(s)}\big\}_{k=0}^{N-1}\) be the trajectory obtained from \ref{eqn:stochastic-Dual-OHM} by replacing only the minibatch \(\cB_s\) by an independent copy \(\cB_s'\), while keeping all other minibatches the same in the two runs.
Then, for every \(k=s+1,\dots,N-1\), 
\begin{equation}
\label{eq:loo-stability-general}
\expec{\sqnorm{x_k-x_k^{(s)}}}
\le
\frac{\alpha^2 \sigma^2}{B}
\left(
1+\frac{(N-k)(N-k-1)}{(N-s)(N-s-1)}
\right).
\end{equation}
In particular,
\begin{equation}
\label{eq:loo-stability-terminal}
\expec{\sqnorm{x_{N-1}-x_{N-1}^{(s)}}}
\le
\frac{\alpha^2 \sigma^2}{B}.
\end{equation}
\end{lemma}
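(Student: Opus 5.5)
The plan is to couple the two runs and propagate their difference through the convex representation of \cref{lemma:convex-representation}. Write $D_k = x_k - x_k^{(s)}$. Since the runs share $\cB_0,\dots,\cB_{s-1}$, we have $D_k=0$ for $k\le s$. In particular $x_s = x_s^{(s)}$.

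Define increments as follows. For $t>s$, set $\Delta_t = \opT_{\cB_t}(x_t) - \opT_{\cB_t}(x_t^{(s)})$. For $t=s$, set $\Delta_s = \opT_{\cB_s}(x_s) - \opT_{\cB_s'}(x_s) = \alpha(e_s' - e_s)$, where $e_s, e_s'$ are the two independent mini-batch errors at the same point. Subtracting \eqref{eq:convex-representation} for the two runs, the $x_0$ term and all terms with $t<s$ cancel. This leaves
\[
D_k = \sum_{t=s}^{k-2} \frac{\Delta_t}{(N-t-1)(N-t)} + \frac{N-k}{N-k+1}\Delta_{k-1}, \qquad k\ge s+1 .
\]
The weights are nonnegative and telescope to $W_s := 1 - \frac{1}{N-s} = \frac{N-s-1}{N-s}$, independently of $k$.

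The two moment bounds I will use are the following. First, $\expec{\sqnorm{\Delta_s}} = \alpha^2\expec{\sqnorm{e_s'-e_s}} \le \frac{2\alpha^2\sigma^2}{B}$, using conditional independence and zero mean of $e_s, e_s'$ given $\cF_s$. Second, for $t>s$, the batch $\cB_t$ is independent of $(x_t, x_t^{(s)})$. So conditioning on both trajectories up to time $t$ and applying the square-nonexpansivity in expectation of $\opT_\cB$ (established in the preliminaries from \cref{assumption:cocoercivity-in-expectation}) gives $\expec{\sqnorm{\Delta_t}} \le \expec{\sqnorm{D_t}}$.

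I then apply Jensen's inequality to the weighted sum, $\sqnorm{\sum_t w_t\Delta_t} \le W_s \sum_t w_t \sqnorm{\Delta_t}$, and take expectations. This yields a recursive inequality for $a_k := \expec{\sqnorm{D_k}}$ in terms of $a_{s+1},\dots,a_{k-1}$ and the source term $\frac{2\alpha^2\sigma^2}{B}$. I will prove \eqref{eq:loo-stability-general} by strong induction on $k$, with hypothesis $a_t \le c(1+\varphi_t)$, where $c=\frac{\alpha^2\sigma^2}{B}$ and $\varphi_t = \frac{(N-t)(N-t-1)}{(N-s)(N-s-1)}$.

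The base case $k=s+1$ is direct: $D_{s+1} = W_s\Delta_s$, so $a_{s+1} \le W_s^2 \cdot 2c$. Since $1+\varphi_{s+1} = \frac{2(N-s-1)}{N-s} = 2W_s$ and $W_s\le 1$, the base case holds. The terminal statement \eqref{eq:loo-stability-terminal} then follows because $\varphi_{N-1}=0$.

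The main obstacle is the inductive step, i.e.\ checking the algebraic inequality
\[
W_s\Big[\tfrac{2}{(N-s-1)(N-s)} + \sum_{t=s+1}^{k-2}\tfrac{1+\varphi_t}{(N-t-1)(N-t)} + \tfrac{N-k}{N-k+1}(1+\varphi_{k-1})\Big] \le 1+\varphi_k .
\]
I will try to verify this by evaluating the sums of $\varphi_t/((N-t-1)(N-t))$ in closed form; they are telescoping, since $\varphi_t/((N-t)(N-t-1))$ is constant in $t$.

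If plain Jensen proves too lossy, I will instead exploit orthogonality. The fresh term $\Delta_s$ has zero conditional mean given $\cF_s$, so I would split $D_k$ into its projection onto $\Delta_s$ and a remainder driven only by $\Delta_t$ with $t>s$. Alternatively, I would work with the two-term recursion $D_{k+1} = D_k + \frac{N-k-1}{N-k}(\Delta_k-\Delta_{k-1})$ and track a potential combining $\expec{\sqnorm{D_k}}$ and $\expec{\sqnorm{\Delta_{k-1}}}$. Either route should recover the decaying factor $\varphi_k$.
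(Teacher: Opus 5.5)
Your plan is correct and is essentially the paper's proof. You couple the two runs through the convex representation, apply Jensen, bound the $t=s$ term by $2\alpha^2\sigma^2/B$ and each $t>s$ term by $\expec{\sqnorm{x_t-x_t^{(s)}}}$ using square-nonexpansivity in expectation (conditioning on both trajectories), and then run strong induction on $k$.

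The algebraic step you flag does close. The bracket in your inductive inequality equals exactly $1+\varphi_k$, which is the identity the paper verifies. Your extra factor $W_s\le 1$ therefore only adds slack, and the fallback routes are unnecessary.
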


\begin{proof}
Define $w_{k,t} = \begin{cases}
\dfrac{1}{(N-t-1)(N-t)}, & 0\le t\le k-2,\\[2mm]
\dfrac{N-k}{N-k+1}, & t=k-1
\end{cases}$ so that by \cref{lemma:convex-representation},
\[
    x_k=\frac1N x_0+\sum_{t=0}^{k-1} w_{k,t}\opT_{\cB_t}(x_t),
    \qquad
    x_k^{(s)}=\frac1N x_0+\sum_{t=0}^{k-1} w_{k,t}\opT_{\cB_t^{(s)}}(x_t^{(s)}),
\]
where \(\cB_t^{(s)}=\cB_t\) for \(t\neq s\), and \(\cB_s^{(s)}=\cB_s'\).
Since \(\sum_{t=0}^{k-1}w_{k,t}=1-\frac1N\), by Jensen's inequality,
\begin{align}
\label{eqn:dks-bound-Jensen}
\begin{aligned}
    d_k^{(s)} & := \expec{\sqnorm{x_k-x_k^{(s)}}} \le \expec{\sum_{t=0}^{k-1} w_{k,t} \sqnorm{ \opT_{\cB_t}(x_t)-\opT_{\cB_t^{(s)}} \big(x_t^{(s)}\big)}} \\
    & = \sum_{t=s}^{k-1} w_{k,t} \expec{\sqnorm{ \opT_{\cB_t}(x_t)-\opT_{\cB_t^{(s)}} \big(x_t^{(s)}\big)}} .
\end{aligned}
\end{align}
Let $\nu^2 = \frac{\alpha^2 \sigma^2}{B}$ for simplicity. We will show by induction on $k$ that 
\begin{align}
\label{eqn:dks-recursive-bound-formula}
    d_k^{(s)} \le \nu^2\left( 1+\frac{(N-k)(N-k-1)}{(N-s)(N-s-1)} \right) .
\end{align}
For $k=s+1$, from \eqref{eqn:dks-bound-Jensen} we have
\begin{align*}
    d_{s+1}^{(s)} \le w_{s+1,s} \expec{\sqnorm{ \opT_{\cB_s}(x_s) - \opT_{\cB_s'} \big(x_s \big) }} \le \frac{N-s-1}{N-s} \cdot 2\nu^2
\end{align*}
which agrees with \eqref{eqn:dks-recursive-bound-formula}. Here we used the fact that $\opT_{\cB_s}(x_s)$ and $\opT_{\cB_s'}(x_s)$ are estimators of $\opT(x_s)$ of variance $\le \frac{\alpha^2 \sigma^2}{B} = \nu^2$, and they are conditionally independent on $\cF_s$. 
Next, let $s+2 \le \ell \le N-1$, and suppose \eqref{eqn:dks-recursive-bound-formula} holds for all $t=s+1,\dots,\ell-1$.
Note that in \eqref{eqn:dks-bound-Jensen}, we have $\cB_t^{(s)} = \cB_t$ for $t>s$, so by \cref{assumption:cocoercivity-in-expectation},
\begin{align*}
    \expec{\sqnorm{ \opT_{\cB_t}(x_t)-\opT_{\cB_t^{(s)}} \big(x_t^{(s)}\big)}} & = \expec{\condexp{\sqnorm{ \opT_{\cB_t}(x_t) - \opT_{\cB_t} \big(x_t^{(s)}\big)}}{\cG_t^{(s)}}} \\
    & \le \expec{\condexp{\sqnorm{x_t - x_t^{(s)}}}{\cG_t^{(s)}}} = \expec{\sqnorm{x_t - x_t^{(s)}}} = d_t^{(s)} 
\end{align*}
where $\cG_t^{(s)} = \sigma(\cB_0, \dots, \cB_{t-1}, \cB_s')$ is the $\sigma$-algebra enlarged from $\cF_t$ to make $x_t^{(s)}$ measurable. 
Applying this to \eqref{eqn:dks-bound-Jensen} with the induction hypothesis, we obtain
\begin{equation}
\label{eq:loo-recursion}
\begin{aligned}
    d_\ell^{(s)} & \le 2 w_{\ell,s}\nu^2 + \sum_{t=s+1}^{\ell-1} w_{\ell,t} d_t^{(s)} \\
    & \le 2 w_{\ell,s}\nu^2 + \sum_{t=s+1}^{\ell-1} w_{\ell,t} \nu^2 \left( 1+\frac{(N-t)(N-t-1)}{(N-s)(N-s-1)} \right) \\
    & = \nu^2 \left[ 2 w_{\ell,s} + \sum_{t=s+1}^{\ell-1} w_{\ell,t} \right] + \frac{\nu^2}{(N-s)(N-s-1)} \sum_{t=s+1}^{\ell-1} w_{\ell,t}(N-t)(N-t-1) .
\end{aligned}
\end{equation}
Now using the explicit formula for \(w_{\ell,t}\), we can directly verify
\[
2 w_{\ell,s} + \sum_{t=s+1}^{\ell-1} w_{\ell,t} = 1 - \frac{1}{N-s} + \frac{1}{(N-s)(N-s-1)},
\]
and
\begin{align*}
    & \sum_{t=s+1}^{\ell-1} w_{\ell,t}(N-t)(N-t-1) = \sum_{t=s+1}^{\ell-2} 1 + \frac{N-\ell}{N-\ell+1} (N-\ell+1)(N-\ell) \\
    & = \ell-s-2 + (N-\ell)^2 = N-s-2 + (N-\ell)(N-\ell-1) .
\end{align*}
Hence
\begin{align*}
d_\ell^{(s)} &\le \nu^2 \left( 1-\frac{1}{N-s} \right) + \frac{\nu^2}{(N-s)(N-s-1)} \Bigl(N-s-1 + (N-\ell)(N-\ell-1)\Bigr) \\
& = \nu^2 \left( 1+\frac{(N-\ell)(N-\ell-1)}{(N-s)(N-s-1)} \right) ,
\end{align*}
which completes the induction, proving \eqref{eq:loo-stability-general}.
Then \eqref{eq:loo-stability-terminal} follows immediately by setting \(k=N-1\).
\end{proof}

Now we are ready to prove \cref{lemma:S-Dual-OHM-cross-term-expectation-bound}.
Fix \(j \in \{1,\dots,N-1\} \), and let \(x_{N-1}^{(j-1)}\) denote the leave-one-out terminal iterate from \cref{lemma:loo-stability}, obtained by replacing only the minibatch \(\cB_{j-1}\) by an independent copy.
By the fact that \(x_{N-1}^{(j-1)}\) is conditionally independent with $e_{j-1}$ (which depends only on $\cB_{j-1}$) on $\cF_{j-1}$ and the tower property, we have $\expec{\inprod{\opT \big(x_{N-1}^{(j-1)} \big)}{e_{j-1}}}=0$.
Hence 
\begin{align*}
\expec{\inprod{e_{j-1}}{\opT x_{N-1}}} & = \expec{\inprod{e_{j-1}}{\opT(x_{N-1}) - \opT \big(x_{N-1}^{(j-1)}\big) } } \\
& \le \sqrt{
\expec{\sqnorm{e_{j-1}}}
\,
\expec{\sqnorm{\opT(x_{N-1})-\opT\big(x_{N-1}^{(j-1)}\big)}}
} \\
& \le
\sqrt{
\frac{\sigma^2}{B}
\expec{\sqnorm{x_{N-1}-x_{N-1}^{(j-1)}}}
} \\
&\le
\sqrt{
\frac{\sigma^2}{B} \cdot \frac{\alpha^2\sigma^2}{B}
}
=
\frac{\alpha \sigma^2}{B}.
\end{align*}

\subsection{Proof of \texorpdfstring{\cref{lemma:stochastic-trajectory-deviation-bound}}{Lemma 4.6}}

Let
\[
    s_k=\mathbb E\norm{x_k - y_k}^2,
    \qquad
    M_k=\max_{0\le i\le k}s_i .
\]
It suffices to show that $M_k \le \frac{\alpha^2 \sigma^2}{B(1-\gamma^2)}$.
For \(k=0\), we have \(x_0=y_0\), so we have $s_0 = M_0 = 0$ and the result is trivial. 
Let \(k\ge 1\). Applying \cref{lemma:convex-representation} for both \ref{eqn:stochastic-Dual-OHM} and its full-batch version \eqref{eqn:Dual-OHM}, we obtain
\[
    x_k - y_k =
    \sum_{t=0}^{k-1} w_{k,t}
    \left[
        \opT(x_t) - \opT(y_t) + e_t
    \right]
\]
where $e_t = \opT_{\cB_t}(x_t) - \opT(x_t)$ and $w_{k,t} = \begin{cases}
\dfrac{1}{(N-t-1)(N-t)}, & 0\le t\le k-2\\
\dfrac{N-k}{N-k+1}, & t=k-1
\end{cases}$.
Note that here $\condexp{e_t}{\cF_t} = 0$ and $\condexp{\sqnorm{e_t}}{\cF_t} \le \nu^2 := \frac{\alpha^2 \sigma^2}{B}$.
Using \(\sum_{t=0}^{k-1} w_{k,t} = 1-\frac{1}{N} < 1\) and Jensen's inequality gives
\[
\begin{aligned}
    & s_k = \expec{\sqnorm{x_k - y_k}} \le
    \sum_{t=0}^{k-1}w_{k,t}
    \expec{\sqnorm{\opT(x_t)-\opT(y_t)+e_t}} \\
    & \le \sum_{t=0}^{k-1}w_{k,t}
    \left( \expec{\sqnorm{\opT(x_t)-\opT(y_t)} + \sqnorm{e_t}} \right) \le \sum_{t=0}^{k-1} w_{k,t} \left( \gamma^2 \expec{\sqnorm{x_t - y_t}} + \nu^2 \right) 
    \le \gamma^2 M_{k-1} + \nu^2 
\end{aligned}
\]
where for the second inequality, we used conditional unbiasedness together with the tower property:
\[
    \expec{\inprod{e_t}{\opT(x_t) - \opT(y_t)}} = \expec{\condexp{\inprod{e_t}{\opT(x_t) - \opT(y_t)}}{\cF_t}} = 0 
\]
which holds because $\opT(x_t) - \opT(y_t)$ is deterministic conditioned on $\cF_t$ and $\condexp{e_t}{\cF_t} = 0$.
Now starting with $M_0 = 0$ and using induction on $k$, the above shows that
\[
    M_k \le \max\left\{M_{k-1}, \gamma^2 M_{k-1} + \nu^2 \right\} \le \max\left\{M_{k-1}, \gamma^2 \frac{\nu^2}{1-\gamma^2} + \nu^2 \right\} = \frac{\nu^2}{1-\gamma^2} .
\]

\subsection{Proof of \texorpdfstring{\cref{lemma:deterministic-tail-difference-bound}}{Lemma 4.7}}

Observe that for $k=1,2,\dots$, 
\begin{align*}
    & \norm{y_{k+1} - y_k} = \frac{N-k-1}{N-k} \norm{\opT(y_k) - \opT(y_{k-1})} \le \gamma \norm{y_{k} - y_{k-1}} \\
    & \implies \norm{y_{k+1} - y_k} \le \gamma^k \norm{y_1 - y_0} = \gamma^k \norm{\frac{N-1}{N} (\opT(y_0) - y_0)} \le \gamma^k (1+\gamma) \norm{y_0 - x_\star}
\end{align*}
where the last inequality uses $\norm{\opT(y_0) - y_0} \le \norm{\opT(y_0) - x_\star} + \norm{x_\star - y_0} \le (1+\gamma) \norm{y_0 - x_\star}$ (recall that we assume $x_0 = y_0$).
Thus
\begin{align*}
    \norm{y_{N-1} - y_k} &\le \sum_{j=k}^{N-2}\norm{y_{j+1} - y_j} \le (1+\gamma)\norm{y_0 - x_\star} \sum_{j=k}^{N-2} \gamma^j \le \frac{1+\gamma}{1-\gamma} \gamma^k \norm{x_0 - x_\star} .
\end{align*}

\newpage

\section{Additional Experiments}
\label{section:additional-experiments}

In the experiments of Section~\ref{section:experiments}, we used for \algname{S-OHM} the same batch size $B$ selected for \ref{eqn:stochastic-Dual-OHM}. 
This was designed to isolate the effect of replacing the anchor update by the dual-anchor update while
keeping all the other configurations identical. 
However, to verify that the observed empirical advantage of \ref{eqn:stochastic-Dual-OHM} is not an artifact of this experiment design, we repeat all four experiments while separately tuning the batch size of \algname{S-OHM} over the same grid $B\in\{1,10,20,50,100\}$, under the same total sample budget $Q=NB$. 
All the other experimental settings are unchanged.
For simplicity, we only compare the results from \ref{eqn:stochastic-Dual-OHM} with tuned $B$ and runs of \algname{S-OHM} using the shared and tuned batch sizes.

\begin{figure}[ht]
    \centering
    \includegraphics[width=0.47\linewidth]{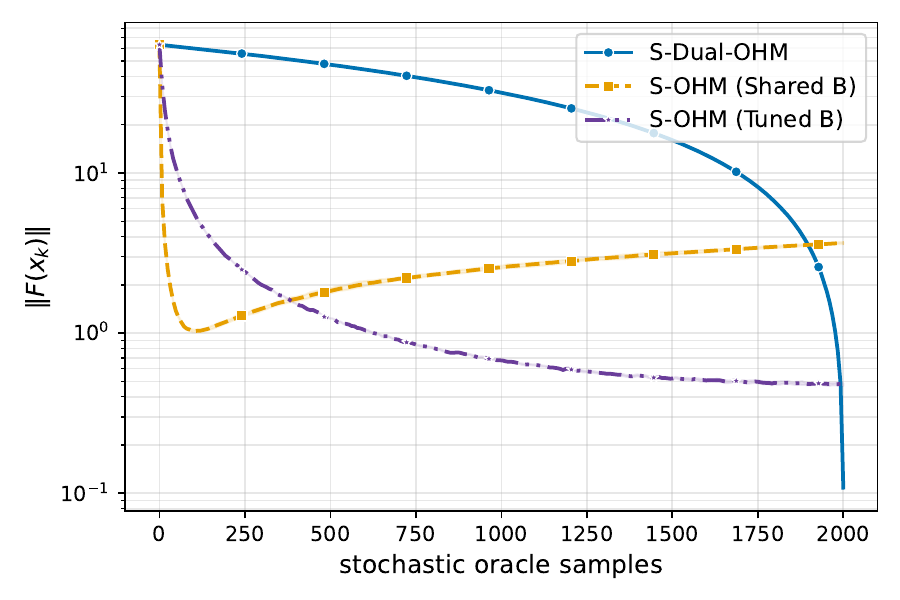}
    \hfill
    \includegraphics[width=0.47\linewidth]{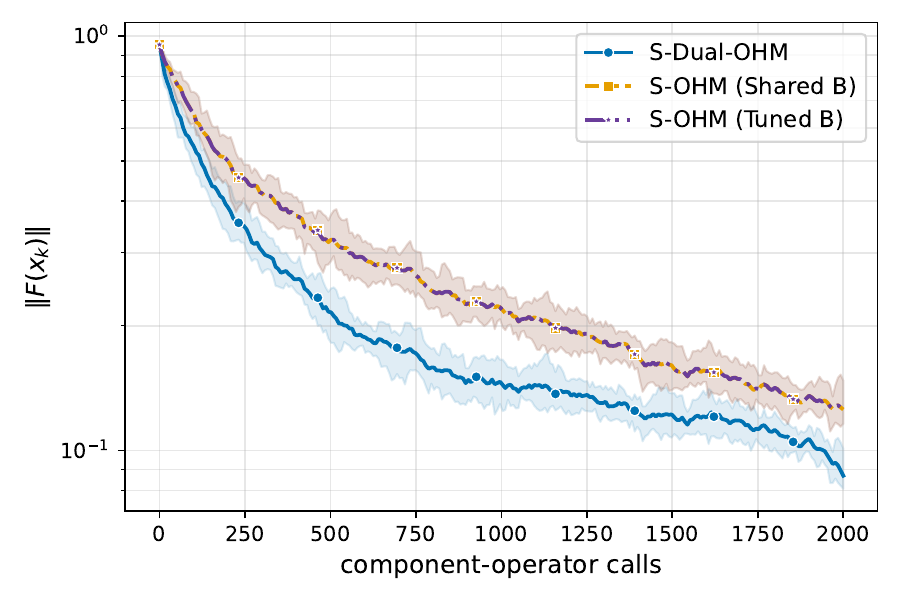} \\
    \includegraphics[width=0.47\linewidth]{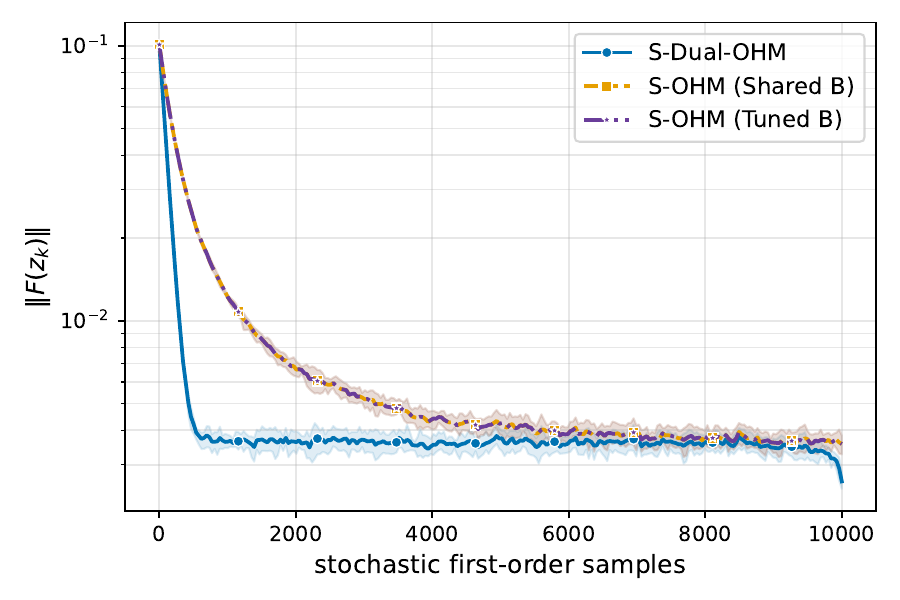}
    \hfill
    \includegraphics[width=0.47\linewidth]{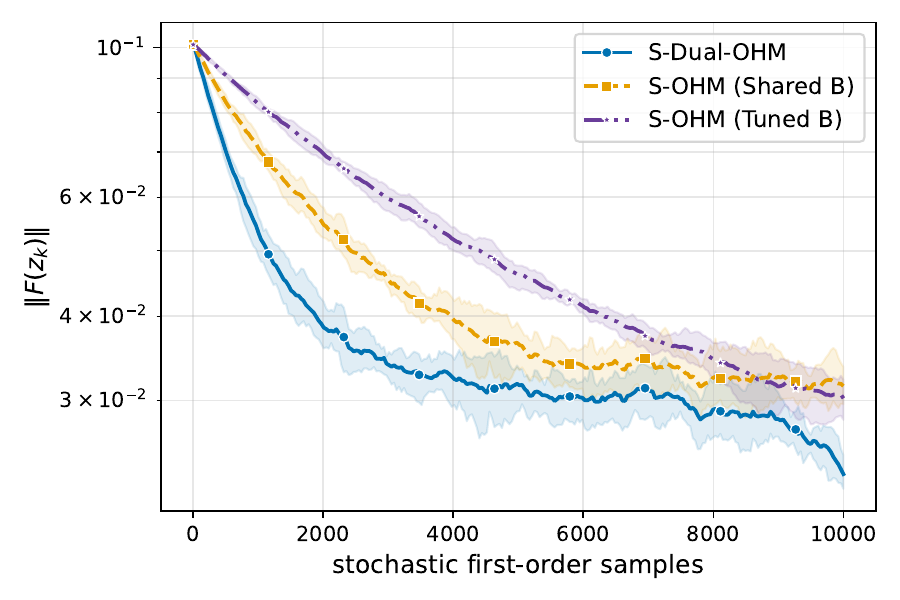}

    \caption{Comparison after independently tuning the constant batch size $B$ of \algname{S-OHM}.
    \textbf{(Top left)} Worst-case nonexpansive operator (Experiment~1).
    \textbf{(Top right)} Finite-sum cocoercive operator (Experiment~2).
    \textbf{(Bottom left)} SCSC Huber-type minimax problem (Experiment~3), in the low-variance regime with $\sigma=0.05$.
    \textbf{(Bottom right)} The same minimax problem from Experiment~3 in the high-variance
    regime with $\sigma=1.5$.
    The plots labeled as \algname{S-OHM} (Tuned $B$) use the best $B\in\{1,10,20,50,100\}$ selected under the given total sample budget. 
    Solid curves are means over 10 independent runs, and shaded bands denote empirical
    5th--95th percentiles.}
    \label{figure:additional-experiments}
\end{figure}

As shown in Figure~\ref{figure:additional-experiments}, independent
tuning selects the same batch size for \algname{S-OHM} and
\ref{eqn:stochastic-Dual-OHM} in Experiment~2 and the low-variance regime in Experiment~3.
In Experiment~1 and the high-variance regime in Experiment~3, the batch sizes independently chosen for \algname{S-OHM} differ.
Nevertheless, even with its own optimized batch size, the final residual attained by \algname{S-OHM} remains strictly larger than that of \ref{eqn:stochastic-Dual-OHM}. 
These results from controlled experiments further supports the empirical benefit/robustness of the dual-anchor mechanism over anchoring in stochastic settings.


\end{document}